\newtheorem{theorem}{Theorem}[section]
\newtheorem{corollary}{Corollary}[section]
\newtheorem{lemma}{Lemma}[section]
\newtheorem{remark}{Remark}[section]
\author{
L. Beir\~ao da Veiga
\thanks{Dipartimento di Matematica,  Universit\`a degli Studi di Milano, Italy, E-mail: {\tt lourenco.beirao@unimi.it}},
C. Lovadina
\thanks{Dipartimento di Matematica, Universit\`a di Pavia, Italy, E-mail: {\tt carlo.lovadina@unipv.it}},
D. Mora
\thanks{ GIMNAP, Departamento de Matem\'atica, Universidad del B\'io-B\'io, Casilla
5-C, Concepci\'on, Chile, and Centro de Investigaci\'on en Ingenier\'ia
Matem\'atica (CI2MA), Universidad de Concepci\'on, Concepci\'on, Chile, E-mail: {\tt dmora@ubiobio.cl}
}
}
\date{}
\title{A Virtual Element Method for elastic and inelastic problems on polytope meshes}
\def\f{{\bf f}}
\def\u{{\bf u}}
\def\v{{\bf v}}
\def\w{{\bf w}}
\def\s{{\bf s}}
\def\n{{\bf n}}
\def\t{{\bf t}}
\def\r{{\bf r}}
\def\q{{\bf q}}
\def\sigmab{\boldsymbol{\sigma}}
\def\deltab{\boldsymbol{\delta}}
\def\taub{\boldsymbol{\tau}}
\def\Pb{\boldsymbol{P}}
\def\Id{\boldsymbol{I}}
\def\V{\mathcal{V}}
\def\W{\mathcal{W}}
\def\Pp{{\cal P}}
\def\Ppb{\boldsymbol{\cal P}}
\def\VhE{V_{h,E}}
\def\Vh{V_{h}}
\def\Vhf{V_{h,f}}
\def\div{\textrm{div}}
\def\dx{\:\textrm{d}x}
\def\ds{\:\textrm{d}s}
\def\ah{a_h}
\def\ahE{a_{h,E}}
\def\athE{\widetilde{a}_{h,E}}
\def\dev{{\rm dev}}
\def\tr{{\rm tr}}
\def\Th{\Omega_h}
\def\Eh{{\cal E}_h}
\def\vb{{\bf v}}
\def\ub{{\bf u}}
\def\tb{{\bf t}}
\def\calV{\cal V}
\def\vv{{\mathsf v}}
\begin{document}

\maketitle

\begin{abstract}
We present a Virtual Element Method (VEM) for possibly nonlinear
elastic and inelastic problems, mainly focusing on a small
deformation regime. The numerical scheme is based on a low-order
approximation of the displacement field, as well as a suitable
treatment of the displacement gradient. The proposed method allows
for general polygonal and polyhedral meshes, it is efficient in terms
of number of applications of the constitutive law, and it can make
use of any standard black-box constitutive law algorithm. 
Some theoretical results have been developed for the elastic case.
Several numerical results within the 2D setting are presented, and a brief discussion on
the extension to large deformation problems is included.
\end{abstract}

\section{Introduction}
\label{sec:intro}

The Virtual Element Method (VEM), introduced in \cite{volley}, is a recent generalization
 of the Finite Element Method which is characterized by the capability of dealing with very general
polygonal/polyhedral meshes and the possibility to easily implement
highly regular discrete spaces \cite{kirchhoff,arbitrary}. Indeed,
by avoiding the explicit construction of the local basis
functions, the VEM can easily handle general polygons/polyhedrons
without complex integrations on the element (see
\cite{hitchhikers} for details on the coding aspects of the
method). 
The interest in numerical methods that can make use of general
polytopal meshes has recently undergone a significant growth in the
mathematical and engineering literature. Among the large number of
papers, we cite as a minimal sample
\cite{Brezzi-Lipnikov-Shashkov:2005,MFD-book,Sukumar-Tabarraei:2004,Gillette-2,TPPM10,Wang-1,Wang-2,DiPietro-Ern-1,DiPietro-Ern-3,Cockburn-Jay-Lazarov}.
Indeed, polytopal meshes can be very useful for a wide range of
reasons, including meshing of the domain (such as cracks) and data
(such as inclusions) features, automatic use of hanging nodes, use
of moving meshes, adaptivity.

In the framework of Structural Mechanics,
recent applications of Polygonal Finite Element Methods, which is a different
technology employing direct integration of complex non-polynomial
functions, have shed light on some very interesting advantages of using general
polygons to mesh the computational domain. This include, for instance, the greater robustness to
mesh distortion \cite{Paulino-nonlinear-polygonal}, a reduced mesh sensitivity of
solutions in topology optimization \cite{TPPM10,Topology-VEM},
better handling of contact problems \cite{Wriggers-2014} and crack
propagation \cite{Paulino-cracks}. Unfortunately, Polygonal Finite
Elements suffer from some serious drawbacks, such as the strong
difficulties in the three dimensional case (polyhedrons) and in
the use of non convex elements.
On the contrary, the VEM is free from the above-mentioned troubles, and thus it represents a very promising approach for Computational Structural Mechanics problems.

Aim of the present paper is to initiate the investigation on the
VEM when applied to non-linear elastic and
inelastic problems in small deformations. More precisely, we
mainly focus on the following cases: 1) non-linear elastic
constitutive laws in a small deformation regime which, however,
pertain to stable materials; 2) inelastic constitutive laws in a
small deformation regime as they arise, for instance, in classical
plasticity problems. We remark that we are not going to consider
here situations with internal constraints, such as
incompressibility, which require additional peculiar numerical
treatment. 
Virtual elements for the linear elasticity problem where
introduced in \cite{elasticity,Paulino-VEM}. The scheme in the
present paper is one of the very first developements of the VEM
technology for nonlinear problems, and it is structured in such a way
that a general non linear constitutive law can be automatically
included. Indeed, on every element of the mesh the constitutive
law needs only to be applied once (similarly to what happens in one-point Gauss
quadrature scheme) and the constitutive law algorithm can be independently embedded
 as a self-standing black-box, as in common
engineering FEM schemes. Therefore, in addition to the advantage
of handling general polygons/polyhedra, the present method is
computationally efficient, in the sense that the constitutive law
needs to be applied only once per element at every iteration
step. The risk of ensuing hourglass modes is avoided by using an
evolution of the standard VEM stabilization procedure used in
linear problems.
However, we highlight that the proposed method is described for general $d$-dimensional problems ($d=2,3$), but the performed numerical experiments are confined to the two dimensional setting.

A brief outline of the paper is as follows. In Section~\ref{sec:1}
we describe the continuous problems we are interested in. In
particular, we distinguish between the elastic, possibly
non-linear, case (Section~\ref{sec:1.1}), and the general
inelastic case (Section~\ref{sec:1.2}). Section~\ref{sec:2} deals
with the VEM discretization. After having introduced the
approximation spaces and the necessary projection operators
(Section~\ref{sec:2.1}), we detail the discrete problems for the
elastic case in Section~\ref{sec:2.2}, and for the inelastic case
in Section~\ref{sec:2.3}. In Section~\ref{theor-results}, combining ideas and techniques from~\cite{Ciarlet:78} and~\cite{volley}, we
provide some theoretical results concerning the convergence of the
proposed scheme in the elastic situation. We remark that our
analysis is confined to cases where the non-linear costitutive law
fulfills suitable continuity and stability properties, as stated
at the beginning of the Section. Section~\ref{sec:num} presents
several numerical examples which asses the actual behaviour of the
proposed scheme. In Sections~\ref{sec:test1} and~\ref{sec:test2}
we consider non-linear elastic cases, while in
Section~\ref{sec:plast} a von Mises plasticity problem with
hardening is detailed. Furthermore, an initial brief discussion
about a possible extension to large deformation problem is
included (Section~\ref{s:largedef}). Finally, we draw some
conclusion in Section~\ref{sec:concls}.

Throughout the paper, we will make use of standard notations regarding Sobolev spaces, norms and seminorms, see~\cite{Boffi-Brezzi-Fortin}, for instance. In addition, $C$ will denote a constant independent of the meshsize, not necessarily the same at each occurrence. Finally, given two real quantities $a$ and $b$, we will write $a\lesssim b$ to mean that there exists $C$ such that $a\le C b$.

\section{The continuous problems}
\label{sec:1}

In the present section we describe the problem considered in this paper. Although the elastic case could be considered as a particular instance of the inelastic case, we prefer to keep the presentation of the two problems separate. This will allow us a clearer presentation of the ideas of the virtual element scheme in the following section.

\subsection{The elastic case}
\label{sec:1.1}

We consider an elastic body $\Omega \subset {\mathbb R}^d$ ($d=2,3$) clamped on part $\Gamma$ of the boundary and subjected to a body load $\f$. We are interested, assuming a regime of small deformations, in finding the displacement $\u:\Omega \rightarrow {\mathbb R}^d$ of the deformed body.

We are given a constitutive law for the material at every point $x \in \Omega$, relating strains to stresses $\sigmab$, through the function
\begin{equation}\label{con-law}
\sigmab = \sigmab(x,\nabla \u(x)) \in {\mathbb R}^{d\times d}_{\textrm{symm}}
\end{equation}
where $\nabla \u$ represents the gradient of the displacement $\u$.

Given the law \eqref{con-law}, the deformation problem reads
\begin{equation}\label{elast-prob-strong}
\left\{
\begin{aligned}
& - \div\, \sigmab = \f  \qquad & \textrm{in} \ \Omega , \\
& \u = 0  \qquad & \textrm{on} \ \Gamma , \\
& \sigmab \n = 0 \qquad & \textrm{on} \ \partial\Omega / \Gamma ,
\end{aligned}
\right.
\end{equation}
where $\n$ denotes the unit outward normal to $\partial \Omega$.

Let now $\V$ denote the space of admissible displacements and $\W$ the space of its variations; both spaces will, in particular, satisfy the homogeneous Dirichlet boundary condition on $\Gamma$.
The variational formulation of the elastic deformation problem reads
\begin{equation}\label{elast-prob}
\left\{
\begin{aligned}
& \textrm{Find } \u \in \V \textrm{ such that } \\
& \int_{\Omega} \sigmab(x,\nabla \u(x)) : \nabla \v(x) \dx = \int_{\Omega} \f(x)\cdot\v(x) \dx
\quad \forall \v \in \W .
\end{aligned}
\right.
\end{equation}

\begin{remark}
The generalization of the results of the present paper to other type of loadings (for instance in the presence of boundary forces) and boundary conditions (for instance in the presence of enforced displacements) is trivial. Our choice in \eqref{elast-prob-strong} allows to keep the exposition shorter.
\end{remark}

\subsection{The inelastic case}
\label{sec:1.2}

We assume a small deformation regime and restrict ourselves to rate independent inelasticity.
We consider a material body $\Omega \subset {\mathbb R}^d$ ($d=2,3$) clamped on part $\Gamma$ of the boundary and subjected to a body load $\f(t,x)$ depending also on a pseudo-time variable $t \in [0,T]$.  The interested reader can find more details in \cite{HanReddy, SimoHughes}, for instance.
We are interested in finding the displacement $\u:\Omega \rightarrow {\mathbb R}^d$ of the deformed body at a given final time $T$.

We are given an inelastic constitutive law for the material, relating strains to stresses $\sigmab$, through the function
\begin{equation}\label{claw-ine}
\sigmab = \sigmab(x,\nabla \u(x), H_x) \in {\mathbb R}^{d\times d}_{\textrm{symm}}
\end{equation}
where the vector $H_x$ contains all history variables at the point $x$.

The above rule is to be coupled with an evolution law $\mathcal{L}$ for the history variables in time
\begin{equation}\label{evol-law}
\dot{\cal H}_x = {\cal L}(x,\nabla u(x),\dot{\nabla}u(x),{\cal H}_x) ,
\end{equation}
where, as usual, a dot above a function stands for a pseudo-time  derivative.
Since we consider a quasi-static problem, at each time instant the stresses and displacements must satisfy the equilibrium and boundary conditions in \eqref{elast-prob-strong}.

We here avoid to write a rigorous variational formulation for the problem above, and limit ourselves to the minimal setting that will be needed to introduce the associated discrete problem.
As in the elastic case, let $\V$ denote the space of admissible displacements and $\W$ the space of its variations.
Then, assuming an initial value for the history variables, the quasi-static inelastic deformation problem can be written as
\begin{equation}\label{inelast-prob}
\left\{
\begin{aligned}
& \textrm{For all } t \in [0,T] \textrm{ find } \u(t,\cdot) \in \V \textrm{ such that } \\
& \int_{\Omega} \sigmab(x,\nabla \u(t,x), H_x(t)) : \nabla \v(x) \dx =  \int_{\Omega} \f(t,x)\cdot\v(x) \dx
\quad \forall \v \in \W ,
\end{aligned}
\right.
\end{equation}
where the displacements and history variables are sufficiently regular in time and must satisfy the evolution law \eqref{evol-law} almost everywhere.

\section{The virtual element approximation}
\label{sec:2}

In the present section we introduce the virtual element discretization of problems \eqref{elast-prob} and \eqref{inelast-prob}. In what follows, given any subset $\omega$ of ${\mathbb R}^d$ ($d=2,3$) and $k \in \mathbb{N}$, we denote by $\Pp_k(\omega)$ (respectively $\Ppb_k(\omega)$) the scalar (respectively vector with $d$ components) polynomials of degree up to $k$ on $\omega$.

\subsection{The virtual spaces and operators}
\label{sec:2.1}

We consider a mesh $\Omega_h$ for the domain $\Omega$, made of general polygonal/polyhedral conforming elements. For the time being, we only assume that such mesh is compatible with the boundary conditions, i.e. that $\Gamma$ is union of faces (edges) of the mesh.
We denote by $E \in \Omega_h$ the generic element of the mesh and by $f$ the generic face (or edge if $d=2$).
The symbols $h_E$ and $| E |$ will represent, respectively, diameter and volume (or area) of the element $E$. As usual, $h$ will indicate the maximum element size.

We start by introducing the discrete virtual space for displacements, that is essentially the same as in \cite{elasticity}.
We first consider the two dimensional case. Given any $E\in\Omega_h$, let the local virtual space
\begin{equation}\label{2d-VhE}
\VhE : = \big\{ \v \in [H^1(E) \cap C^0(E)]^2 \: : \: \Delta\v = 0 \textrm{ in } E, \: \v|_f \in \Ppb_1(f) \: \forall f \in \partial E \big\} ,
\end{equation}
where $\Delta$ denotes the component-wise Laplace operator. The space $V_{h,E}$ is a space of harmonic functions that on the boundary of the element are piecewise linear (edge by edge) and continuous. Such space is \emph{virtual} in the sense that is well defined but not known explicitly inside the element.

Note that $ \Ppb_1(E) \subseteq\VhE$; in the case of a triangular element, we recover exactly the standard $\Ppb_1$ space. It is easy to check \cite{elasticity} that a set of degrees of freedom for the space $\VhE$ is simply given by the collection of the vertex values:
$$
\bullet \ \textrm{Pointwise values } \{ \v(\nu) \}_{\nu \in \partial E} \textrm{ with } \nu \textrm{ denoting a vertex of } E .
$$
Once the above degrees of freedom values are given, since $\v \in \VhE$ is linear on each edge, the value of $\v$ on the boundary $\partial E$ is completely determined. Therefore, an integration by parts allows to compute the integral average of the gradient
\begin{equation}\label{int-grad}
\frac{1}{|E|} \int_E \nabla \v \dx = \frac{1}{|E|} \sum_{f \in\partial E} \int_f \v \otimes \n_f \ds \quad \forall\, \v \in \VhE ,
\end{equation}
with $\n_f$ indicating the outward unit normal at each edge $f$.

\medskip

We now define the virtual local spaces for the three dimensional case. Given a polyhedron $E \in \Omega_h$, any face $f \in \partial E$ is now a polygon. We denote by $\Vhf$ the virtual bi-dimensional space \eqref{2d-VhE} on the polygon $f$ adjusted with three components:
\begin{equation}\label{Vhf}
\Vhf : = \big\{ \v \in [H^1(f) \cap C^0(f)]^3 \: : \: \Delta\v = 0 \textrm{ in } f, \: \v|_e \in \Ppb_1(e) \: \forall e \in \partial f \big\} ,
\end{equation}
where the symbol $e$ represents the generic edge of the polyhedron and $\Delta$ denotes the planar laplacian on $f$.
We then define
\begin{equation}\label{3d-VhE}
\VhE = \big\{ \v \in [H^1(E) ]^3 \: : \: \Delta\v = 0 \textrm{ in } E, \: \v|_f \in \Vhf \: \forall f \in \partial E \big\} .
\end{equation}
The space $V_{h,E}$ is a space of harmonic functions that on the boundary of the element are continuous and, on each face, functions of $\Vhf$. Note that, as a consequence, the functions of $\VhE$ are linear on each edge of the polyhedron.

Again we note that $ \Ppb_1(E) \subseteq\VhE$; in the case of a tetrahedral element, we recover exactly the standard $\Ppb_1$ space. It is easy to check that a set of degrees of freedom for the space $\VhE$ is again given by
$$
\bullet \ \textrm{Pointwise values } \{ \v(\nu) \}_{\nu \in \partial E} \textrm{ with } \nu \textrm{ denoting a vertex of } E .
$$
An integration by parts exactly as in \eqref{int-grad} allows to compute, for all $E \in \Omega_h$ the integral average of the gradient, provided one is able to compute the face integrals $ \int_f \v \otimes \n_f \ds $ for all $ f\in \partial E $ and $ \v \in \VhE $.
Such face integrals can be easily computed by introducing the virtual space modification proposed in \cite{enhanced}, that we do not detail here. The result is
$$
\int_f \v \otimes \n_f = \sum_{\nu \in \partial E} \omega_\nu \v (\nu)  ,
$$
where the scalars $\{ \omega_\nu \}_{\nu \in \partial E}$ are the weights of any integration rule on the face that is exact for linear functions.

\medskip

Once the local virtual spaces are defined, all that follows holds identically in two and three dimensions.
We can now present the global virtual space
$$
\Vh := \big\{ \v \in \V \: : \: \v|_{E} \in \VhE \quad \forall E \in \Omega_h \big\} .
$$
A set of degrees of freedom for $\Vh$ is given by all pointwise values of $\v$ on all vertices of $\Omega_h$, excluding the vertices on $\Gamma$ (where the value vanishes).

In the following, we will denote by $\Pi^0$ the tensor valued $L^2$ projection operator on the space of piecewise constant functions and by $\Pi^0_E$ its restriction to the generic element $E \in \Omega_h$. More precisely, for any ${\bf G} \in (L^2(\Omega))^{d \times d}$, we have $(\Pi^0 {\bf G} )_E = \Pi^0_E ({\bf G} |_E)$ with
the local operators defined as
\begin{equation}\label{AL-proj1}
\Pi^0_E {\bf G} |_E = \frac{1}{|E|} \int_E {\bf G}  \dx \qquad \forall E \in \Omega_h .
\end{equation}
We have the following important remark, which is a direct consequence of~\eqref{int-grad}.
\begin{remark}\label{rem:comp}
For all functions $\v\in\VhE$ and all elements $E\in\Omega_h$, the operators
$\Pi^0_E(\nabla \v)$ are explicitly computable.
\end{remark}

We moreover introduce a second projection operator $\Pi^\nabla$, defined on $\Vh$ as follows. For any $\v \in \Vh$, we have $(\Pi^\nabla \v)_E = \Pi^\nabla_E (\v|_E)\in \Ppb_1(E)$ with
the local operators defined as
\begin{equation}\label{AL-proj2}
\left\{
\begin{aligned}
& \nabla (\Pi^\nabla_E( \v |_E)) = \Pi^0_E (\nabla \v|_E) , \\
&  \sum_{\nu \in \partial E} (\Pi^\nabla_E \v) (\nu) = \sum_{\nu \in \partial E} \v(\nu)
\end{aligned}
\right.
\end{equation}
for all $E$ in $\Omega_h$.
Note that, by definition, $\Pi^\nabla \v$ is a (discontinuous) piecewise linear function on $\Omega_h$. On each element  $E$, $\Pi^\nabla_E( \v |_E)$  is the unique linear function such that:

\begin{enumerate}

\item its (constant) gradient equals the mean value over $E$ of the function $\nabla \v$;

\item its vertex value average equals the vertex value average of  $\v$.

\end{enumerate}

We notice that the second condition in~\eqref{AL-proj2} is only to fix the constant part of $\Pi^\nabla \v$ on each element.
Recalling Remark \ref{rem:comp}, it is immediate to check that the operator $\Pi^\nabla$ is explicitly computable.

\subsection{The elastic case}
\label{sec:2.2}

The main missing step is to introduce the local forms that will be used in the discrete variational formulation.
We assume that the constitutive law \eqref{con-law} is piecewise constant with respect to the mesh $\Omega_h$. Therefore, instead of $\sigmab(x,\nabla \u(x))$, we will write $\sigmab_E(\nabla \u(x))$ to represent the constitutive law on $E$, $E \in \Omega_h$ and $x \in E$. In addition, foe every pair $\v\in\V$ and $\w\in \W$, we introduce the bilinear forms $a_E(\v, \w)$ and $a(\v,\w)$ as:

\begin{equation}\label{a-forms}
\begin{aligned}
a_E(\v,\w)& = \int_E  \sigmab(x,\nabla \v(x)) : \nabla \w(x) \dx , \\
a(\v,\w)& = \int_\Omega  \sigmab_E(\nabla \v(x)) : \nabla \w(x) \dx .
\end{aligned}
\end{equation}
Therefore, it holds
\begin{equation}\label{local-sum}
a(\v,\w) =\sum_{E\in\Omega_h} a_E(\v,\w)
\end{equation}
and, recalling~\eqref{elast-prob}, the elastic problem can be written as

\begin{equation}\label{elast-prob-short}
\left\{
\begin{aligned}
& \textrm{Find } \u \in \V \textrm{ such that } \\
& a(\u,\v) = \int_{\Omega} \f(x)\cdot\v(x) \dx
\quad \forall \v \in \W .
\end{aligned}
\right.
\end{equation}

We now consider, for all $E \in \Omega_h$ and all $\v_h,\w_h \in \VhE$, the following \emph{preliminary} form
\begin{equation}\label{a-tilde}
\begin{aligned}
\athE(\v_h,\w_h) &=  \int_E \sigmab_E(\Pi^0_E (\nabla \v_h)(x)) : (\Pi^0_E (\nabla \w_h)(x)) \dx \\
& = |E| \: \sigmab_E(\Pi^0_E (\nabla \v_h)) : \Pi^0_E (\nabla \w_h) ,
\end{aligned}
\end{equation}
where the identity above follows since all the involved functions are constant on the element.
The above form is $\Ppb_1$-consistent, in the sense that it recovers exactly the original form whenever the first entry is a linear polynomial. Indeed, it follows from~\eqref{AL-proj1} and~\eqref{AL-proj2} that

\begin{equation}\label{a-tilde-cons}
\begin{aligned}
\athE(\q,\v_h) &=  \int_E \sigmab_E(\Pi^0_E (\nabla \q)(x)) : (\Pi^0_E (\nabla \w_h)(x)) \dx   \\
&  =  \int_E \sigmab_E(\nabla \q(x)) : (\Pi^0_E (\nabla \w_h)(x)) \dx   =  \int_E \sigmab_E(\nabla \q(x)) : \nabla \w_h(x) \dx  \\
& = a_E (\q,\v_h) \qquad \forall \q\in \Ppb_1(E) , \forall \v_h \in \VhE .
\end{aligned}
\end{equation}

However, unless the elements are triangular/tetrahedral, the form $\athE(\cdot,\cdot)$ has a non-physical kernel that may lead to spurious modes in the solution.
We therefore follow the idea proposed initially in \cite{volley} and introduce the discrete bilinear form
\begin{equation}\label{Sh}
\begin{aligned}
& S_{h,E} \: : \: \VhE \times \VhE \: \longrightarrow {\mathbb R} , \\
& S_{h,E}(\v_h,\w_h) = h_E^{d-2} \: \sum_{\nu \in \partial E} \v_h(\nu) \w_h(\nu) \qquad \forall \v_h,\w_h \in \VhE .
\end{aligned}
\end{equation}

As discussed in \cite{elasticity,volley}, under suitable mesh regularity assumptions detailed in Section \ref{theor-results}, there exist positive constants $c_*,c^*$ independent of the element such that
\begin{equation}\label{equiS}
c_* \int_E || \nabla^{\rm sym} \v_h ||^2  \dx   \le S_{h,E}(\v_h,\v_h) \le c^* \int_E || \nabla^{\rm sym} \v_h ||^2  \dx
\end{equation}
for all $\v_h \in \VhE$ with $\Pi^\nabla_E \v_h=0$.
In other words, on the orthogonal complement of $\Ppb_1(E)$ with respect to $\VhE$, the bilinear form $S_{h,E}(\cdot, \cdot)$ behaves as the local energy of a linearly elastic body with unitary material constants and is thus suitable to stabilize $\athE(\cdot,\cdot)$ form in such case.
In order to take into account different material constants and also nonlinear materials, the form $S_{h,E}(\cdot, \cdot)$ needs to be multiplied by a positive constant $\alpha_E$ that may depend on the discrete solution.

We therefore introduce the following local virtual form on $\VhE$.
For all $E \in \Omega_h$ and all $\s_h, \v_h,\w_h \in \VhE$
\begin{equation}\label{a-local}
\begin{aligned}
\ahE(\s_h;\v_h,\w_h) & =  \athE(\v_h,\w_h) + \alpha_E(\s_h) S_{h,E}(\v_h - \Pi^\nabla_E \v_h, \w_h - \Pi^\nabla_E \w_h) ,
\end{aligned}
\end{equation}
where the {\em stabilizing parameter} $\alpha_E>0$ depends on the additional entry $\s_h$. We remark that the bilinear form $\ahE(\cdot; \cdot ,\cdot)$ is still $\Ppb_1$-consistent. This follows from~\eqref{a-tilde-cons} and the observation that  $\q - \Pi^\nabla_E \q = 0$ for every $ \q\in\Ppb_1 $.
The choice that we here propose for the parameter $\alpha_E$ is
\begin{equation}\label{alfa}
\alpha_E(\s_h) = ||| \frac{\partial \sigmab_{\! E}}{\partial \nabla \u}(\Pi^0_E \nabla \s_h|_E) |||
\qquad \forall E \in \Omega_h, \ \forall \s_h \in \Vh ,
\end{equation}
with $||| \cdot |||$ representing any norm on the fourth order tensor space, for instance the maximum of the absolute values of all the entries, see Remark~\ref{rem:alfa}.

We present also the global form

\begin{equation}\label{a-global}
\ah(\s_h;\v_h,\w_h) = \sum_{E\in\Omega_h} \ahE(\s_h;\v_h,\w_h) \quad \forall \s_h, \v_h,\w_h \in \Vh .
\end{equation}

Given $\s_h\in \Vh$, a possible virtual discretization of Problem \eqref{elast-prob} is
\begin{equation}\label{elast-virt-prob-prel}
\left\{
\begin{aligned}
& \textrm{Find } \u_h \in \Vh \textrm{ such that } \\
&  \ah(\s_h;\u_h,\v_h)  = <\f , \v_h>_h \quad \forall \v_h \in \Vh .
\end{aligned}
\right.
\end{equation}
Above, the load approximation term
$$
<\f , \v_h>_h = \sum_{\nu \in \partial E} \omega_\nu \f(\nu) \v (\nu)
$$
is a vertex-based quadrature rule with weights $\omega_\nu$  chosen to provide the exact integral on $E$ when applied to linear functions. Furthermore, a reasonable choice for $\s_h$ could be $\s_h=\u_h$.

We instead propose a modification of~\eqref{elast-virt-prob-prel}, that is more practical from the implementation viewpoint.
We assume the usual incremental loading procedure for the solution of the nonlinear discrete problem: given a positive integer $N$, let the partial loadings $\f^n = (n/N) \f$ for all $n=1,2,...,N$.
Then, given the initial displacement $\u_h^0$ (for instance the zero function), one applies for $n=1,2,...,N$ the iterative procedure
\begin{equation}\label{elast-virt-prob}
\left\{
\begin{aligned}
& \textrm{Find } \u_h^n \in \Vh \textrm{ such that } \\
&  \ah(\u_h^{n-1};\u_h^n,\v_h)  = <\f^n , \v_h>_h \quad \forall \v_h \in \Vh .
\end{aligned}
\right.
\end{equation}
The final solution is then $\u_h=\u_h^N$.
The nonlinear problems above can be solved with the Newton scheme. Note that, since the stability constants $\alpha_E$ (see \eqref{a-local}) are computed by using $\u_h^{n-1}$, the tangent matrix in the Newton iterations turns out to be simpler. Since $N$ is typically taken large (at least 10, but often much more) the effect of such modification is not detrimental for the discrete approximation; the constants $\alpha_E$ are only used as scaling parameters and do not enter the accuracy of the algorithm.

We close the section with some observations regarding the local forms $\ahE$ used in the scheme. First, we recall that the proposed forms are  $\Ppb_1(E)$-consistent, in the sense that for all $E\in\Omega_h$, we have:
\begin{equation}\label{consistency}
\ahE(\s_h;{\bf q},\v_h) = \int_E \sigma_E(\nabla {\bf q}) : \nabla \v_h \dx
\quad \forall \s_h, \v_h \in \VhE , \ \forall {\bf q} \in \Ppb_1(E) .
\end{equation}
Identity \eqref{consistency} is a fundamental condition for approximation and, in particular, guarantees the satisfaction of the patch test.
Moreover, such forms are explicitly computable for any polygonal/polyhedral element (even non-convex). Finally, the constitutive law needs to be computed only \emph{once} per element and thus the method, from this point of view, is as cheap as finite elements with one point gauss integration rule. This observation has an even bigger impact in the inelastic case, where the constitutive laws are typically more expensive to compute.

\begin{remark}\label{rem:alfa}
The motivation for choice \eqref{alfa} and~\eqref{elast-virt-prob} is to better mimic the stability properties of the material for the current displacement.
For materials in which the stress-strain incremental relation does not depend too strongly on the value of the current displacement, the constants $\alpha_E$ can be taken as independent of $\u_h^{n-1}$. For instance, a scaling directly proportional to the local material constants could be used. On the other hand, the choice proposed in \eqref{alfa} and~\eqref{elast-virt-prob} give good results for a wider range of materials. Examples and investigations in this direction can be found in Section~\ref{sec:num}.
\end{remark}

\subsection{The inelastic case}
\label{sec:2.3}

We start by introducing a sub-division of the ``time'' interval $[0,T]$ into smaller intervals $[t_{n-1},t_n]$ for $n=1,2,...,N$, where for simplicity we assume that $t_n = nT/N$. We will denote the partial loadings by $\f^n = (n/N) \f$ for all $n=1,2,...,N$.

We assume, as in standard engineering procedures, a constitutive algorithm that is an approximation of the constitutive and evolution laws \eqref{claw-ine}, \eqref{evol-law}. In Finite Element analysis, this pointwise algorithm can be coded independently from the global FE construction and can be regarded as a ``black-box'' procedure that is applied at every Gauss point and at every iteration step.
In the present Virtual Element method, we want to keep the same approach; in other words, our scheme will be compatible with any black-box constitutive algorithm that follows in the general setting below and that can be imported from other independent sources.

We assume that the constitutive law is piecewise constant with respect to the mesh $\Omega_h$.
Let $\hat\sigmab_E$ represent the constitutive algorithm for the element $E \in \Omega_h$. For any $x \in E$, given a value for the displacement gradient $\nabla\u_h^{n-1}(x)$ at time $t_{n-1}$, a value $H_x^{n-1}$ for the history variables at time $t_{n-1}$ and a tentative value for the displacement gradient $\nabla\u_h^n(x)$ at time $t_n$, the algorithm computes the stresses (and updates the history variables) at time $t_n$.
We thus write the computed stress as
$$
\widehat\sigmab_E (\nabla \u_h^{n-1}(x), H_x^{n-1}, \nabla \u_h^n(x)) .
$$
As part of the approximation procedure of our method, we assume that the history variables $H_x$ are piecewise constant with respect to the mesh, and therefore write $H_E^n$ to represent the value assumed on the element $E\in\Omega_h$ at time $t_n$. Consistently, $H^n$ will represent the collection of all $\{ H_E^n \}_{E \in \Omega_h}$.

In our scheme, instead of applying the constitutive algorithm at Gauss points, we make use of the projections introduced in the previous sections and of the same stabilization as in the elastic case.
The Virtual Element scheme reads, for $n=1,2,...,N$:
\begin{equation}\label{inelast-virt-prob}
\left\{
\begin{aligned}
& \textrm{Find } \u_h^n \in \Vh \textrm{ (and the updated } H^n) \textrm{ such that } \\
& \ah(\u_h^{n-1},\u_h^n,H^{n-1},\v_h)  = <\f^n , \v_h>_h \quad \forall \v_h \in \Vh ,
\end{aligned}
\right.
\end{equation}
where the form
$$
\ah(\u_h^{n-1},\u_h^n,H^{n-1},\v_h) = \sum_{E\in\Omega_h} \ahE(\u_h^{n-1},\u_h^n,H_E^{n-1},\v_h)
$$
with, for all $E \in \Omega_h$,
$$
\begin{aligned}
\ahE(\u_h^{n-1},\u_h^n,H_E^{n-1},\v_h) = &
|E| \: \widehat\sigmab_E(\Pi^0_E \nabla \u_h^{n-1}, H_E^{n-1}, \Pi^0_E \nabla \u_h^n) : \Pi^0_E (\nabla \v_h) \\
& + \alpha_E(\u_h^{n-1}) S_{h,E}(\u_h^n - \Pi^\nabla_E \u_h^n, \v_h - \Pi^\nabla_E \v_h) .
\end{aligned}
$$
Here above, the bilinear form $S_{h,E}$ and the scalar $\alpha_E$ are calculated as already shown in
\eqref{Sh} and \eqref{alfa}, respectively.
Note that, as already mentioned in Section \ref{sec:2.2}, the constitutive algorithm needs to be applied only once per element.



\section{Theoretical results}\label{theor-results}

We here develop an error analysis for the method described in Section~\ref{sec:2.2}, under some additional hypotheses on the function $\sigmab(x,\nabla \u(x))=\sigmab_{\! E}(\nabla\u(x))$. More precisely, we assume that the following properties are satisfied.

\medskip

\noindent {\bf Hypotheses (RPC)}

\begin{itemize}

\item The function $\taub \mapsto \sigmab_{\! E}(\taub) $ belongs to ${\bf C}^1({\mathbb R}^{d\times d})$ for every $E\in\Omega_h$;

\item for every $E\in\Omega_h$, the differential $\frac{\partial \sigmab_{\! E}}{\partial \taub}(\taub)$ satisfies

\begin{enumerate}

\item there exists $C_\alpha>0$ such that

\begin{equation}
\frac{\partial \sigmab_{\! E}}{\partial \taub}(\taub)\, \s : \s \ge C_\alpha ||\s||^2\qquad \forall\, \s \in {\mathbb R}^{d\times d} ,
\label{sigmaposit}
\end{equation}

\item there exists $C_M>0$ such that

\begin{equation}
\frac{\partial \sigmab_{\! E}}{\partial \taub}(\taub)\, \s : \t \le C_M ||\s|| \,  ||\t||\qquad \forall\, \s ,\t \in {\mathbb R}^{d\times d} .
\label{sigmacont}
\end{equation}

\end{enumerate}

\end{itemize}

We moreover explicit here the shape regularity conditions that are needed for the theoretical results of the present paper. We assume that there exists a positive constant $C_s$ such that all the elements $E$ of the mesh sequence are star shaped with respect to a ball of radius $\rho \ge C_s h_E$ and that all the edges $e$ of $E$ have length $h_e \ge C_s h_E$.

\begin{lemma}
Let the bilinear forms $a_E(\cdot,\cdot)$,  $a(\cdot,\cdot)$, $\ahE(\cdot;\cdot,\cdot)$ and  $\ah(\cdot; \cdot,\cdot)$ be defined by~\eqref{a-forms}, \eqref{a-local} and~\eqref{a-global}. Suppose that the Hypotheses (RPC) introduced above are satisified. Then, it holds
\begin{equation}
| \v_h - \w_h |_{1,\Omega}^2 \lesssim \ah( \s_h;\v_h, \v_h - \w_h)  - \ah(\s_h;\w_h, \v_h - \w_h) \qquad \forall\, \v_h,\w_h,\s_h\in \Vh .
\label{coerciv-est}
\end{equation}
\begin{equation}
 a_E( \v, \r )  - a_E(\w, \r )
\lesssim | \v -\w  |_{1,E} | \r|_{1,E}\qquad \forall\, \v ,\w , \r \in \V .
\label{cont-est}
\end{equation}
\begin{equation}
 \ahE( \s_h;\v_h, \r_h)  - \ahE(\s_h;\w_h, \r_h)
\lesssim | \v_h -\w_h  |_{1,E} | \r_h |_{1,E}\qquad \forall\, \v_h,\w_h,\s_h,\r_h\in \Vh .
\label{cont-esth}
\end{equation}
\end{lemma}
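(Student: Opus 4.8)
The plan is to exploit the $C^1$-regularity of $\sigmab_E$ through the elementary representation
$$
\sigmab_E(\taub_1) - \sigmab_E(\taub_2) = \left(\int_0^1 \frac{\partial \sigmab_E}{\partial \taub}\big(\taub_2 + t(\taub_1 - \taub_2)\big)\,dt\right)(\taub_1 - \taub_2),
$$
which turns all three statements into estimates on the integrated differential, so that hypotheses \eqref{sigmaposit} and \eqref{sigmacont} can be applied directly under the $t$-integral. I would treat the two continuity bounds first, as a warm-up, and then the coercivity estimate.

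For \eqref{cont-est} I write $a_E(\v,\r) - a_E(\w,\r) = \int_E [\sigmab_E(\nabla \v) - \sigmab_E(\nabla \w)] : \nabla \r \dx$ and insert the representation with $\taub_1 = \nabla\v$, $\taub_2 = \nabla\w$; the integrand is then pointwise bounded by $C_M \|\nabla(\v-\w)\|\,\|\nabla\r\|$ thanks to \eqref{sigmacont}, and a Cauchy--Schwarz inequality in $x$ yields the claim. The discrete version \eqref{cont-esth} follows the same pattern for the consistency part $\athE(\v_h,\r_h) - \athE(\w_h,\r_h) = |E|\,[\sigmab_E(\Pi^0_E\nabla\v_h) - \sigmab_E(\Pi^0_E\nabla\w_h)]:\Pi^0_E\nabla\r_h$, where I combine \eqref{sigmacont} with the $L^2$-stability $\sqrt{|E|}\,\|\Pi^0_E\nabla\v_h\|\le |\v_h|_{1,E}$ of the constant projection. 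For the stabilization part I would bound $\alpha_E(\s_h)$ from above by a constant (any tensor norm $|||\cdot|||$ is equivalent to the operator norm, which \eqref{sigmacont} caps by $C_M$), apply the Cauchy--Schwarz inequality to the positive semidefinite form $S_{h,E}$, use the upper bound in \eqref{equiS}, and finally observe that $\nabla(\phi - \Pi^\nabla_E\phi) = \nabla\phi - \Pi^0_E\nabla\phi$, so that $|\phi - \Pi^\nabla_E\phi|_{1,E}\le |\phi|_{1,E}$ by orthogonality of $\Pi^0_E$.

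For the coercivity estimate \eqref{coerciv-est} I would test with $\r_h = \v_h - \w_h =: \phi$ element by element. The consistency contribution becomes $|E|\,\big(\int_0^1 \frac{\partial\sigmab_E}{\partial\taub}(\cdot)\,dt\big)\Pi^0_E\nabla\phi : \Pi^0_E\nabla\phi$, bounded below by $C_\alpha\,\|\Pi^0_E\nabla\phi\|_{0,E}^2$ via \eqref{sigmaposit}. The stabilization contribution equals $\alpha_E(\s_h)\,S_{h,E}(\phi - \Pi^\nabla_E\phi, \phi - \Pi^\nabla_E\phi)$; here I bound $\alpha_E(\s_h)$ from below by a positive constant (again by norm equivalence, \eqref{sigmaposit} forces the operator norm to be at least $C_\alpha$) and invoke the lower bound in \eqref{equiS} to control it from below by $\|\nabla^{\rm sym}(\phi - \Pi^\nabla_E\phi)\|_{0,E}^2$. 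Since $\nabla\phi = \Pi^0_E\nabla\phi + \nabla(\phi - \Pi^\nabla_E\phi)$ is an $L^2$-orthogonal splitting, adding the two contributions and summing over $E$ recovers $|\phi|_{1,\Omega}^2$ — \emph{provided} the symmetric gradient of $\phi - \Pi^\nabla_E\phi$ controls its full gradient.

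This last point is where I expect the main difficulty. The stabilization only sees the symmetric gradient, whereas the target is the full $H^1$ seminorm, so a Korn-type inequality $|\psi|_{1,E}\lesssim \|\nabla^{\rm sym}\psi\|_{0,E}$ is required for $\psi = \phi - \Pi^\nabla_E\phi$. The key observation is that $\Pi^\nabla_E\psi = 0$ forces $\int_E \nabla\psi = 0$, whence the mean skew-symmetric gradient (the mean infinitesimal rotation) vanishes, which is exactly the gauge condition under which the second Korn inequality holds with the full gradient on the left-hand side. The real work is to show that the Korn constant is \emph{uniform} over the whole mesh sequence; this is precisely what the shape-regularity assumptions (star-shapedness with respect to a ball of radius $\ge C_s h_E$, edges of length $\ge C_s h_E$) are designed to guarantee, through a standard scaling argument to a reference configuration. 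Once this uniform Korn inequality is available, combining it with the orthogonal splitting above closes \eqref{coerciv-est}.
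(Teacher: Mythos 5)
Your proposal is correct and takes essentially the same route as the paper's proof: the same splitting of $\ahE$ into consistency and stabilization parts, the integral representation turning Hypotheses (RPC) into monotonicity and Lipschitz-type bounds for $\sigmab_E$, the uniform bounds $c_1\le\alpha_E(\s_h)\le c_2$, and the $L^2$-orthogonal splitting $\nabla\phi=\Pi^0_E\nabla\phi+\nabla\big(\phi-\Pi^\nabla_E\phi\big)$ to reassemble the full seminorm. The only difference is one of explicitness: the uniform Korn inequality you identify as the ``real work'' (needed because \eqref{equiS} controls only the symmetric gradient, while \eqref{coerciv-est} involves the full $H^1$ seminorm) is exactly what the paper absorbs silently into its use of \eqref{equiS} under the stated shape-regularity assumptions and the cited references, so your argument makes visible a step the paper glosses over rather than taking a different path.
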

\begin{proof}
We first note that~\eqref{sigmaposit} and~\eqref{sigmacont}, together with~\eqref{alfa}, imply the existence of positive constants $c_1$ and $c_2$ such that
\begin{equation}\label{alfa-bounds}
c_1 \le \alpha_E(\s_h) \le c_2
\qquad \forall E \in \Omega_h, \ \forall \s_h \in \Vh .
\end{equation}

{\it Step (i): proof of~\eqref{coerciv-est}.} From~\eqref{sigmaposit}, we deduce that

\begin{equation}
\left(
\sigmab_{\! E}(\s) - \sigmab_{\! E}(\t) \right): \left( \s - \t \right)
\ge C ||\s -\t ||^2\qquad \forall\, \s , \t \in {\mathbb R}^{d\times d} .
\label{lemma1}
\end{equation}
Therefore, for every $\v,\w \in \V$ we have
\begin{equation}
a_E(\v, \v-\w)- a_E(\w, \v-\w)= \int_E\left( \sigmab_{\! E}(\nabla\v) -\sigmab_{\! E}(\nabla\w) \right):(\nabla\v -\nabla\w)
\ge C | \v-\w  |_{1,E}^2 ,
\label{lemma2}
\end{equation}
by which
\begin{equation}
 | \v-\w  |_{1,\Omega}^2 \lesssim a(\v, \v-\w)- a(\w, \v-\w)  \qquad \forall\, \v,\w \in \V  .
\label{lemma3}
\end{equation}
For every $\v_h,\w_h , \s_h \in \Vh$, we have (see~\eqref{a-local})
\begin{equation}
\begin{aligned}
\ahE(& \s_h;\v_h, \v_h - \w_h)  - \ahE(\s_h;\w_h, \v_h - \w_h)\\
&=  \athE(\v_h,\v_h - \w_h) -  \athE(\w_h,\v_h - \w_h) \\
& + \alpha_E(\s_h) S_{h,E}( (\v_h -\w_h) - \Pi^\nabla_E( \v_h  -\w_h),  (\v_h  - \w_h ) - \Pi^\nabla_E( \v_h - \w_h)) .
\end{aligned}
\label{lemma4}
\end{equation}
We now notice that (see~\eqref{a-tilde})
\begin{equation}
\begin{aligned}
&\athE(\v_h,\v_h - \w_h) -  \athE(\w_h,\v_h - \w_h)\\
&\qquad = \int_E \sigmab_E(\Pi^0_E (\nabla \v_h)) : (\Pi^0_E( \nabla \v_h) - \Pi^0_E( \nabla \w_h)) \\
&\qquad\qquad -
\int_E \sigmab_E(\Pi^0_E (\nabla \w_h)) : (\Pi^0_E( \nabla \v_h) - \Pi^0_E( \nabla \w_h))\\
&\qquad =
\int_E\big[ \sigmab_E(\Pi^0_E (\nabla \v_h)) -  \sigmab_E(\Pi^0_E (\nabla \w_h))\big] : (\Pi^0_E( \nabla \v_h) - \Pi^0_E( \nabla \w_h)) .
\end{aligned}
\label{lemma5}
\end{equation}
First using~\eqref{lemma1} with $\s = \Pi^0_E (\nabla \v_h)$ and $\t = \Pi^0_E (\nabla \w_h)$, then recalling \eqref{AL-proj2} we get
\begin{equation}
\begin{aligned}
\athE(\v_h,\v_h - \w_h) & -  \athE(\w_h,\v_h - \w_h)\ge C
||\Pi^0_E (\nabla \v_h)  -\Pi^0_E (\nabla \w_h) ||_{0,E}^2 \\
&= C\,  ||    \nabla (\Pi^\nabla_E ( \v_h -\w_h )) ||_{0,E}^2 =
C\, | \Pi^\nabla_E ( \v_h -\w_h ) |_{1,E}^2 .
\end{aligned}
\label{lemma6}
\end{equation}
In addition, we have, using~\eqref{alfa-bounds} and~\eqref{equiS}:
\begin{equation}
\begin{aligned}
\alpha_E(\s_h) & S_{h,E}( (\v_h -\w_h) - \Pi^\nabla_E( \v_h  -\w_h),  (\v_h  - \w_h ) - \Pi^\nabla_E( \v_h - \w_h)) \\
& \ge C |   (\v_h -\w_h) - \Pi^\nabla_E( \v_h  -\w_h)  |_{1,E}^2
\end{aligned}
\label{lemma7}
\end{equation}
Combining~\eqref{lemma4} with~\eqref{lemma6} and~\eqref{lemma7}, we infer
\begin{equation}
| \v_h - \w_h |_{1,E}^2 \lesssim \ahE( \s_h;\v_h, \v_h - \w_h)  - \ahE(\s_h;\w_h, \v_h - \w_h) \qquad \forall\, \v_h,\w_h,\s_h\in \Vh .
\label{lemma8}
\end{equation}
Summing up over all the elements, we get~\eqref{coerciv-est}:
\begin{equation}
| \v_h - \w_h |_{1,\Omega}^2 \lesssim \ah( \s_h;\v_h, \v_h - \w_h)  - \ah(\s_h;\w_h, \v_h - \w_h) \qquad \forall\, \v_h,\w_h,\s_h\in \Vh .
\label{lemma9}
\end{equation}

{\it Step (ii): proof of~\eqref{cont-est} and~\eqref{cont-esth}.} From~\eqref{sigmacont}, we deduce that
\begin{equation}
\left(
\sigmab_{\! E}(\s) - \sigmab_{\! E}(\t) \right): \taub
\le C || \s -\t  || \,  || \taub ||
\qquad \forall\, \s , \t , \taub \in {\mathbb R}^{d\times d} ,
\label{lemma10}
\end{equation}
%
%
%
from which we easily get~\eqref{cont-est}:
\begin{equation}
 a_E( \v, \r )  - a_E(\w, \r )
\lesssim | \v -\w  |_{1,E} | \r|_{1,E}\qquad \forall\, \v ,\w , \r \in \V .
\label{lemma12}
\end{equation}
We now notice that (see~\eqref{a-tilde})
\begin{equation}
\begin{aligned}
\athE(\v_h,\r_h) -  \athE(\w_h,\r_h) =
\int_E\big[ \sigmab_E(\Pi^0_E (\nabla \v_h)) -  \sigmab_E(\Pi^0_E (\nabla \w_h))\big] : \Pi^0_E( \nabla \r_h) .
\end{aligned}
\label{lemma13}
\end{equation}
Using~\eqref{lemma10}, identity \eqref{lemma13} yields
\begin{equation}
\begin{aligned}
\athE(\v_h,\r_h) -  \athE(\w_h,\r_h) \lesssim   |\v_h -\w_h |_{1,E} |\r_h |_{1,E} \qquad \forall\, \v_h , \w_h , \r_h \in \Vh.
\end{aligned}
\label{lemma15}
\end{equation}
To continue, since $S_{h,E}(\cdot,\cdot)$ is a bilinear form and using continuity arguments, we have for every $\s_h\in\Vh$ (see~\eqref{equiS})
\begin{equation}
\begin{aligned}
\alpha_E(\s_h)  S_{h,E}( \v_h  &- \Pi^\nabla_E( \v_h)  ,  \r_h- \Pi^\nabla_E( \r_h)) -
\alpha_E(\s_h)  S_{h,E}( \w_h  - \Pi^\nabla_E( \w_h)  ,  \r_h- \Pi^\nabla_E( \r_h))\\
 & = \alpha_E(\s_h)  S_{h,E}( (\v_h -\w_h) - \Pi^\nabla_E( \v_h  -\w_h),  \r_h- \Pi^\nabla_E( \r_h)) \\
& \lesssim |\v_h -\w_h |_{1,E} |\r_h |_{1,E} .
\end{aligned}
\label{lemma16}
\end{equation}
From~\eqref{a-local}, using~\eqref{lemma15} and~\eqref{lemma16}, we deduce~\eqref{cont-esth}.
%
%
\end{proof}

\begin{theorem}
Let $\u\in\V$ be the solution of Problem~\eqref{elast-prob}. Given any $\s_h\in \Vh$, let  $\u_h\in \Vh$ be the solution of Problem~\eqref{elast-virt-prob-prel}:
\begin{equation}\label{vp2}
\left\{
\begin{aligned}
& \textrm{Find } \u_h \in \Vh \textrm{ such that } \\
&  \ah(\s_h;\u_h,\v_h)  = <\f , \v_h>_h \quad \forall \v_h \in \Vh .
\end{aligned}
\right.
\end{equation}
For any $\u_I\in\Vh$ and $\u_\pi\in {\bf L}^2(\Omega)$ such that $\u_{\pi | E} \in \Ppb_1(E)$, it holds:
\begin{equation}
|\u   - \u_h |_{1,\Omega} \lesssim \sup_{\v_h\in \Vh} \frac{< \f , \v_h>_h - (\f,\v_h)}{|\v_h|_{1,\Omega}}   +
|\u - \u_I  |_{1,\Omega}   +    |\u - \u_\pi  |_{1,\Omega} ,
\label{errest}
\end{equation}
where $(\cdot,\cdot)$ denotes the $[L(\Omega)]^d$-scalar product.
\label{error-estimate}
\end{theorem}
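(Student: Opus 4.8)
The plan is to run a nonlinear Str\"ang/C\'ea argument, in the spirit of~\cite{Ciarlet:78} and~\cite{volley}, splitting the total error through the interpolant $\u_I$ and exploiting that the three estimates of the preceding Lemma reproduce, for the nonlinear forms, exactly the monotonicity and Lipschitz structure one would use in the linear case. First I would set $\v_h = \u_h - \u_I \in \Vh$ and apply the coercivity estimate~\eqref{coerciv-est} to the pair $(\u_h,\u_I)$, obtaining
$$|\u_h - \u_I|_{1,\Omega}^2 \lesssim \ah(\s_h;\u_h,\v_h) - \ah(\s_h;\u_I,\v_h).$$
Since $\u_h$ solves~\eqref{vp2}, the first term equals $<\f,\v_h>_h$, so the right-hand side becomes a residual that I can then compare with the continuous problem.

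The key reduction is to insert the exact load and the exact form. Because $\Vh \subset \W$, the continuous equation~\eqref{elast-prob-short} gives $a(\u,\v_h) = (\f,\v_h)$, so I would write
$$\ah(\s_h;\u_h,\v_h) - \ah(\s_h;\u_I,\v_h) = \big[<\f,\v_h>_h - (\f,\v_h)\big] + \big[a(\u,\v_h) - \ah(\s_h;\u_I,\v_h)\big].$$
The first bracket is controlled directly by the supremum appearing in~\eqref{errest} times $|\v_h|_{1,\Omega}$. For the second bracket I would work element by element, inserting the piecewise linear function $\u_\pi$ and using the $\Ppb_1$-consistency~\eqref{consistency}, which gives $\ahE(\s_h;\u_\pi,\v_h) = a_E(\u_\pi,\v_h)$. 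Hence on each $E$,
$$a_E(\u,\v_h) - \ahE(\s_h;\u_I,\v_h) = \big[a_E(\u,\v_h) - a_E(\u_\pi,\v_h)\big] + \big[\ahE(\s_h;\u_\pi,\v_h) - \ahE(\s_h;\u_I,\v_h)\big].$$

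At this point I would bound the two differences by the continuity estimates~\eqref{cont-est} and~\eqref{cont-esth}, giving $|\u - \u_\pi|_{1,E}\,|\v_h|_{1,E}$ and $|\u_\pi - \u_I|_{1,E}\,|\v_h|_{1,E}$ respectively; summing over $E$, applying Cauchy--Schwarz, and using the triangle inequality $|\u_\pi - \u_I|_{1,\Omega} \le |\u - \u_\pi|_{1,\Omega} + |\u - \u_I|_{1,\Omega}$, the second bracket is dominated by $(|\u - \u_\pi|_{1,\Omega} + |\u - \u_I|_{1,\Omega})\,|\v_h|_{1,\Omega}$. Collecting all contributions, every term carries the common factor $|\v_h|_{1,\Omega} = |\u_h - \u_I|_{1,\Omega}$, which I cancel once to estimate $|\u_h - \u_I|_{1,\Omega}$; a final triangle inequality $|\u - \u_h|_{1,\Omega} \le |\u - \u_I|_{1,\Omega} + |\u_I - \u_h|_{1,\Omega}$ yields~\eqref{errest}.

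The genuinely delicate point is not the algebra of the splitting but the fact that the nonlinear forms must be compared on \emph{differences}: coercivity in~\eqref{coerciv-est} is applied to $\ah(\s_h;\u_h,\cdot)-\ah(\s_h;\u_I,\cdot)$ rather than to a single bilinear form, and likewise continuity is invoked on differences. The whole argument therefore hinges on the Lemma already encoding strong monotonicity and Lipschitz continuity \emph{uniformly} in the frozen argument $\s_h$, which is guaranteed by the uniform bounds~\eqref{alfa-bounds} on the stabilization parameter $\alpha_E$. One should also check that the same frozen $\s_h$ appears in every occurrence of $\ah$, so that no mismatch in $\alpha_E(\s_h)$ spoils the cancellations; here this is automatic, since $\s_h$ is held fixed throughout~\eqref{vp2}.
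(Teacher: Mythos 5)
Your proposal is correct and follows essentially the same argument as the paper's proof: coercivity~\eqref{coerciv-est} applied to $\u_h-\u_I$, the discrete equation to replace $\ah(\s_h;\u_h,\cdot)$ by the discrete load, the continuous equation and the consistency property~\eqref{consistency} to insert $\u$ and $\u_\pi$, the continuity bounds~\eqref{cont-est}--\eqref{cont-esth} on the resulting differences, and final triangle inequalities. The only difference is cosmetic (you insert the continuous equation before splitting elementwise with $\u_\pi$, whereas the paper does it in the reverse order), so the two proofs coincide term by term.
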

\begin{proof}
Given $\u_I\in \Vh$, we set $\deltab_h= \u_h - \u_I$. For every $\u_\pi\in {\bf L}^2(\Omega)$ such that $\u_{\pi | E} \in \Ppb_1(E)$, using \eqref{coerciv-est} we have
\begin{equation}
\begin{aligned}
| & \u_h  - \u_I |^2_{1,\Omega} \lesssim
 a_h( \s_h; \u_h, \deltab_h)  -a_h(\s_h;  \u_I , \deltab_h)\\
&= < \f , \deltab_h>_h - \sum_{E\in\Omega_h}  \ahE(\s_h; \u_I ,\deltab_h)\\
& =
< \f , \deltab_h>_h - \sum_{E\in\Omega_h}  \Big\{
\big[\ahE(\s_h; \u_I ,\deltab_h)  - \ahE(\s_h; \u_\pi ,\deltab_h) \big] +
\ahE(\s_h; \u_\pi ,\deltab_h)   \Big\} .
\label{est1}
\end{aligned}
\end{equation}
Since \eqref{consistency} implies $\ahE(\s_h; \u_\pi ,\deltab_h) = a_E(\u_\pi ,\deltab_h)$, from~\eqref{est1} we get
\begin{equation}
\begin{aligned}
|\u_h  & - \u_I |^2_{1,\Omega} \lesssim\quad
< \f , \deltab_h>_h - \sum_{E\in\Omega_h}  \Big\{
\big[\ahE(\s_h; \u_I ,\deltab_h)  - \ahE(\s_h; \u_\pi ,\deltab_h) \big] +
a_E(\u_\pi ,\deltab_h)   \Big\} \\
& = < \f , \deltab_h>_h - \sum_{E\in\Omega_h}
\big[\ahE(\s_h; \u_I ,\deltab_h)  - \ahE(\s_h; \u_\pi ,\deltab_h) \big] \\
&\qquad\qquad   - \sum_{E\in\Omega_h} \big[
a_E(\u_\pi ,\deltab_h) -  a_E(\u ,\deltab_h) \big] - a(\u,\deltab_h) \\
& = \big[ < \f , \deltab_h>_h - (\f,\deltab_h) \big] - \sum_{E\in\Omega_h}
\big[\ahE(\s_h; \u_I ,\deltab_h)  - \ahE(\s_h; \u_\pi ,\deltab_h) \big] \\
&\qquad\qquad   - \sum_{E\in\Omega_h} \big[
a_E(\u_\pi ,\deltab_h) -  a_E(\u ,\deltab_h) \big] .
\label{est2}
\end{aligned}
\end{equation}
We then obtain, using \eqref{cont-est} and \eqref{cont-esth}
\begin{equation}
|\u_h   - \u_I |^2_{1,\Omega} \lesssim \left( \sup_{\v_h\in \Vh} \frac{< \f , \v_h>_h - (\f,\v_h)}{|\v_h|_{1,\Omega}}   +
|\u_I - \u_\pi  |_{1,\Omega}   +    |\u_\pi - \u  |_{1,\Omega}\right)  | \deltab_h |_{1,\Omega} ,
\label{est3}
\end{equation}
by which, recalling that $\deltab_h = \u_h -\u_I$, we infer
\begin{equation}
|\u_h   - \u_I |_{1,\Omega} \lesssim \sup_{\v_h\in \Vh} \frac{< \f , \v_h>_h - (\f,\v_h)}{|\v_h|_{1,\Omega}}   +
|\u_I - \u_\pi  |_{1,\Omega}   +    |\u_\pi - \u  |_{1,\Omega} .
\label{est4}
\end{equation}
The triangle inequality thus gives
\begin{equation}
|\u   - \u_h |_{1,\Omega} \lesssim \sup_{\v_h\in \Vh} \frac{< \f , \v_h>_h - (\f,\v_h)}{|\v_h|_{1,\Omega}}   +
|\u - \u_I  |_{1,\Omega}   +    |\u - \u_\pi  |_{1,\Omega} .
\label{est5}
\end{equation}
\end{proof}

\begin{remark} Theorem~\ref{error-estimate} applies also
to Problem~\eqref{elast-virt-prob} at the final step $N$. Indeed, it is sufficient to make the choices $\f=\f^N$, $\s_N=\u_h^{N-1}$ in~\eqref{vp2} , and  to identify $\u_h$ in~\eqref{vp2} with $\u_h^N$ in~\eqref{elast-virt-prob}.
\end{remark}

\begin{corollary}
Following the same notation of Theorem \ref{error-estimate}, let moreover $\u \in [H^2(\Omega)]^d$.
Then the linear convergence bound holds
$$
|\u   - \u_h |_{1,\Omega} \lesssim h \: |\u|_{2,\Omega} .
$$
\end{corollary}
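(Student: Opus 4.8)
The plan is to invoke Theorem~\ref{error-estimate} with judicious choices of $\u_I$ and $\u_\pi$, and then to bound each of the three terms on the right-hand side of~\eqref{errest} by $h\,|\u|_{2,\Omega}$. Since the hidden constant in~\eqref{errest} is independent of the meshsize, it suffices to produce such a bound term by term and then sum.

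For the polynomial term, I would take $\u_\pi$ to be the elementwise best approximation of $\u$ in $\Ppb_1(E)$ (equivalently, an averaged Taylor polynomial of degree one on each $E$). The shape-regularity hypotheses stated at the beginning of Section~\ref{theor-results} --- namely that every $E$ is star-shaped with respect to a ball of radius $\rho\ge C_s h_E$ --- are precisely what is needed to apply the Bramble--Hilbert/Dupont--Scott estimates on each element, yielding $|\u-\u_\pi|_{1,E}\lesssim h_E\,|\u|_{2,E}$. Summing over $E\in\Omega_h$ and using $h_E\le h$ gives $|\u-\u_\pi|_{1,\Omega}\lesssim h\,|\u|_{2,\Omega}$.

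For the interpolation term, I would let $\u_I\in\Vh$ be the canonical virtual interpolant of $\u$, i.e. the element of $\Vh$ sharing with $\u$ all the vertex-value degrees of freedom. Standard virtual-element interpolation theory, developed under the same star-shapedness and minimal-edge-length assumptions in \cite{volley,elasticity}, then provides the local estimate $|\u-\u_I|_{1,E}\lesssim h_E\,|\u|_{2,E}$, and summation again delivers $|\u-\u_I|_{1,\Omega}\lesssim h\,|\u|_{2,\Omega}$.

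The remaining contribution, the load-consistency error $\sup_{\v_h\in\Vh}\big(<\f,\v_h>_h-(\f,\v_h)\big)/|\v_h|_{1,\Omega}$, is the delicate one and I expect it to be the main obstacle. The strategy is to exploit that the vertex quadrature rule defining $<\cdot,\cdot>_h$ is exact on $\Ppb_1(E)$: on each element one subtracts a linear approximation of $\v_h$, for instance $\Pi^\nabla_E\v_h$, so that the quadrature error reduces to higher-order remainders of $\v_h-\Pi^\nabla_E\v_h$ paired against $\f$, which a further Bramble--Hilbert argument controls by $h$ times a norm $\|\f\|_{\ast}$ of the data and $|\v_h|_{1,\Omega}$. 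Note that the vertex rule uses pointwise values $\f(\nu)$, so an implicit regularity of $\f$ enters here; invoking the equilibrium relation $-\div\sigmab=\f$ together with the ${\bf C}^1$-regularity of $\sigmab_{\! E}$ and $\u\in[H^2(\Omega)]^d$, this data norm is in turn absorbed into $|\u|_{2,\Omega}$, which closes the estimate. Combining the three bounds with~\eqref{errest} then yields $|\u-\u_h|_{1,\Omega}\lesssim h\,|\u|_{2,\Omega}$.
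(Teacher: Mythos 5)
Your proposal takes essentially the same route as the paper: the paper's proof is a one-line appeal to Theorem~\ref{error-estimate} combined with standard polynomial (Bramble--Hilbert) and virtual-element interpolation estimates under the stated star-shapedness assumptions, which is exactly your treatment of the $\u_\pi$ and $\u_I$ terms. Your explicit sketch for the load-consistency term goes beyond what the paper records (it subsumes that term into the cited standard results), and it is the standard argument; the only caveat --- shared by the paper itself --- is that the vertex values $\f(\nu)$ require pointwise regularity of $\f$ that $\u\in[H^2(\Omega)]^d$ alone does not guarantee.
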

\begin{proof}
The results follows immediately combining Theorem \ref{error-estimate} with standard polygonal approximation estimates for the spaces $\VhE$ and $\Ppb_1(E)$, see \cite{volley,Steklov-VEM}.
\end{proof}

\section{Numerical tests}
\label{sec:num}

In the present section we test our virtual method.
In the first two examples (see Sections~\ref{sec:test1} and \ref{sec:test2}), the body occupies the region
$\Omega:=(0,1)^2$, where lengths are expressed in meters. We employ the following types
of mesh (see also Figures~\ref{FIG:1}-\ref{FIG:2}):
\begin{itemize}
\item $\Th^1$: Structured hexagonal meshes.
\item $\Th^2$: Non-structured hexagonal meshes made of convex hexagons.
\item $\Th^3$: Regular subdivisions of the domain in $N\times N$ subsquares.
\item $\Th^4$: Trapezoidal meshes which consist of partitions of the domain
into congruent trapezoids, all similar to the trapezoid with vertexes
$(0,0)$, $(\frac{1}{2},0)$, $(\frac{1}{2},\frac{2}{3})$, and $(0,\frac{1}{3})$.
\end{itemize}

In what follows, $N_h$ denotes the number of vertices in the mesh under consideration.

\begin{figure}
\begin{center}
\begin{minipage}{4.2cm}
\centering\includegraphics[height=4.2cm,
width=4.2cm]{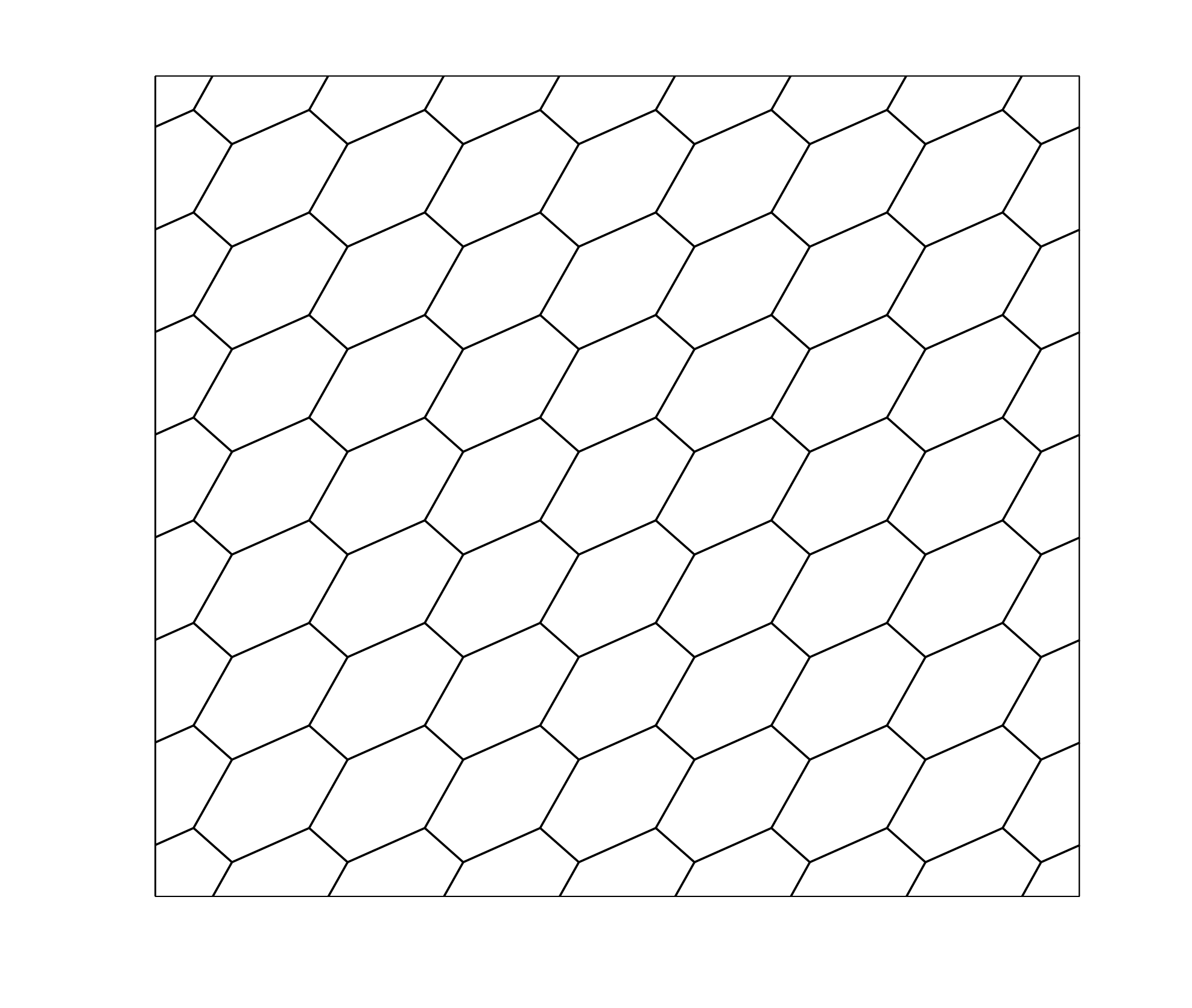}
\end{minipage}
\begin{minipage}{4.2cm}
\centering\includegraphics[height=4.2cm,
width=4.2cm]{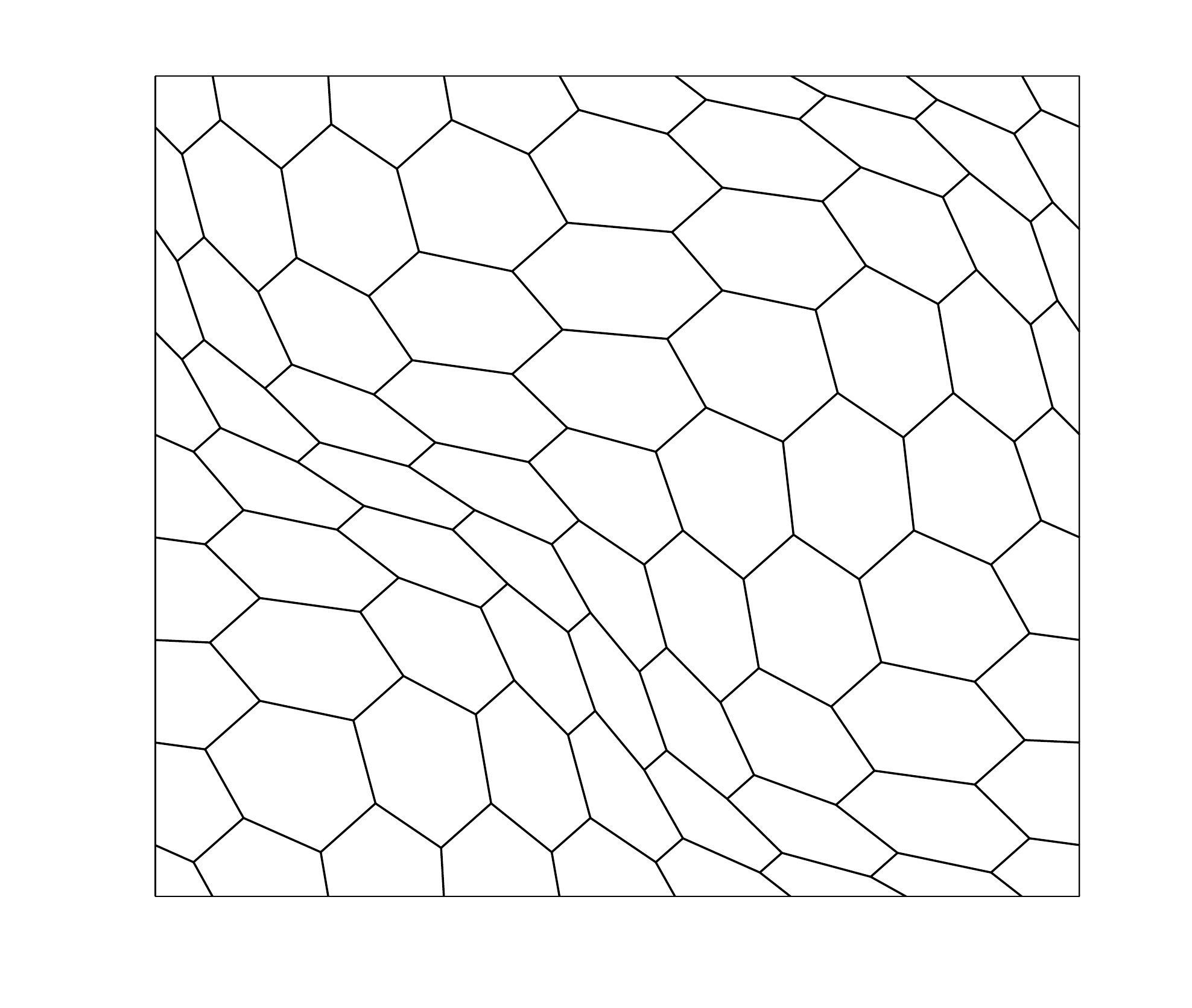}
\end{minipage}
\caption{Sample meshes: $\Th^{1}$ (left) and $\Th^{2}$ (right).}
\label{FIG:1}
\end{center}
\end{figure}
\begin{figure}
\begin{center}
\begin{minipage}{4.2cm}
\centering\includegraphics[height=4.2cm,
width=4.2cm]{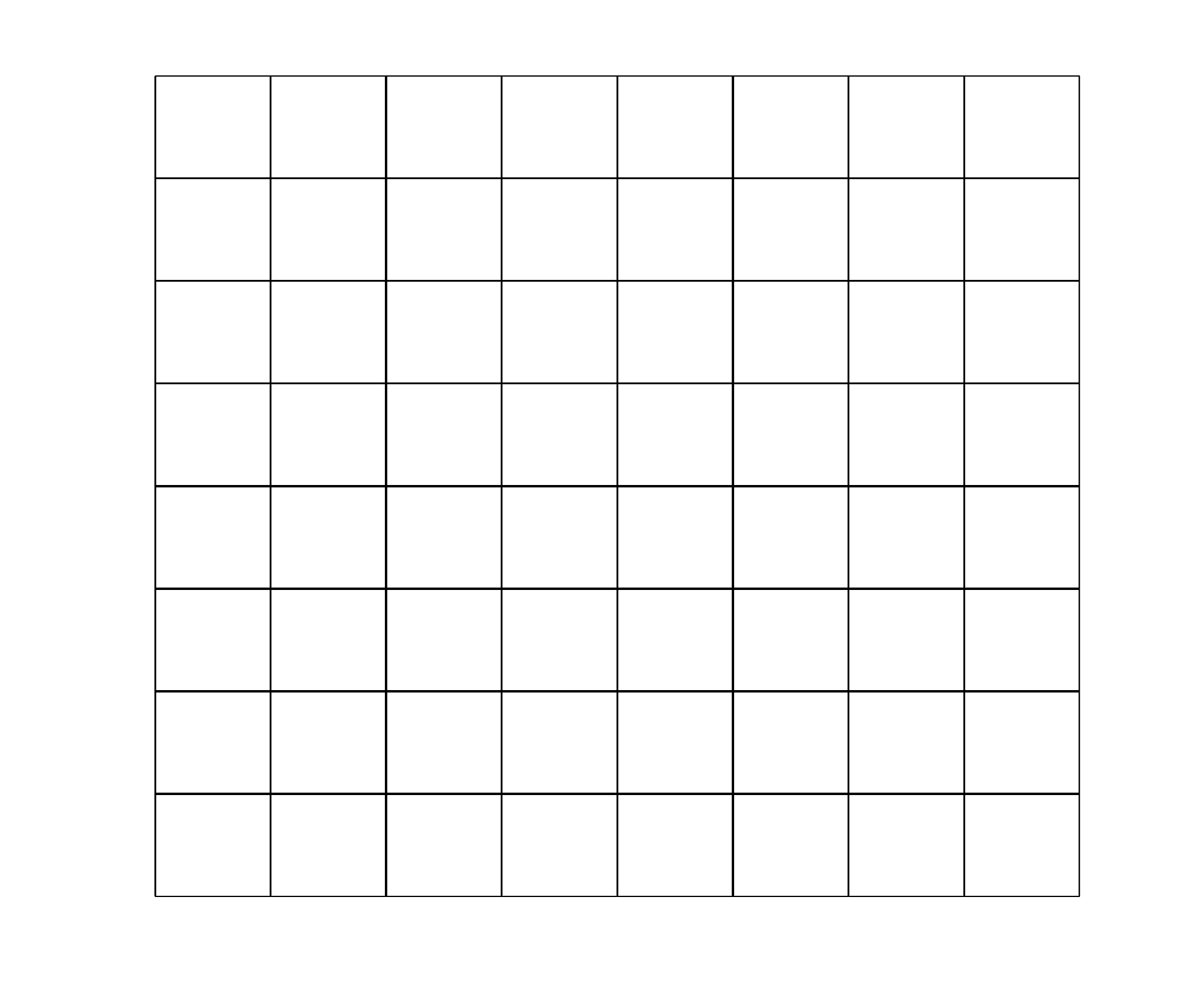}
\end{minipage}
\begin{minipage}{4.2cm}
\centering\includegraphics[height=4.2cm,
width=4.2cm]{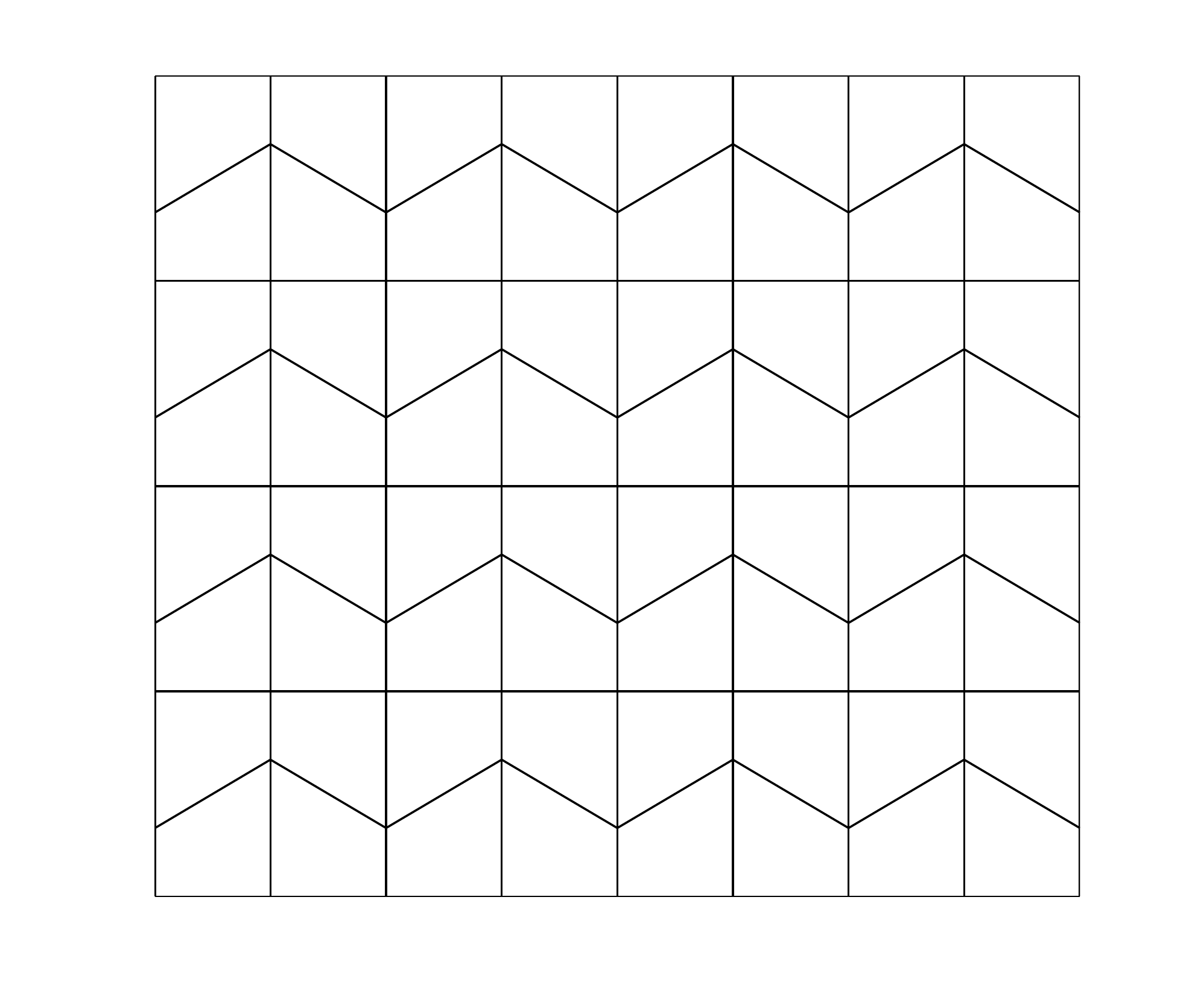}
\end{minipage}
\caption{Sample meshes: $\Th^{3}$ (left) and $\Th^{4}$ (right).}
\label{FIG:2}
\end{center}
\end{figure}

To test the convergence properties of the methods,
we introduce the following discrete maximum norm:
for any sufficiently regular function $\vb$,
\begin{equation}\label{error-norm0}
||| {\vb} |||_{0,\infty}:=\max_{\vv\in {\calV}_h}{| \vb(\vv) |_{\infty}}
\end{equation}
where ${\calV}_h$ represents the set of vertexes of $\Th$
and $| \cdot |_{\infty}$ denotes the $l^{\infty}$ vector norm.
We also introduce the following discrete $H^1$ like norm:
\begin{equation}\label{error-norm1}
||| \vb |||_{1,2}:=\left(\sum_{e\in\Eh}h_e\left\Vert\frac{\partial \vb}{\partial \tb_e}\right\Vert_{0,e}^2\right)^{1/2} ,
\end{equation}
where $\Eh$ and $h_e$ denote the set of edges in the mesh and the length of the edge $e$, respectively. Moreover, $\tb_e$ denotes one of the two tangent vectors to the edge $e$, chosen once and for all.
Accordingly, we denote by
$$E_{0,\infty}^h  := ||| \ub - \ub_h |||_{\infty}\qquad E_{1,2}^h  := ||| \ub - \ub_h |||_{1,2}$$
the corresponding errors and we measure the experimental order of convergence as
\begin{equation*}
R :=-2\frac{\log(E(\cdot)/E'(\cdot))}{\log(N_h/N_{h'})},
\end{equation*}
where $N_h$ and $N_{h'}$ denote the number of vertices in two consecutive
meshes, with corresponding errors $E$ and $E'$.


\subsection{Hencky-von Mises elasticity problem with analytical solution}
\label{sec:test1}

The first constitutive law we consider, taken from \cite{GMR13},
is the non-linear Hencky-von Mises elasticity model,
for which
$$\sigmab = \sigmab(x,\nabla \u(x))=
\tilde{\lambda}(\dev(\boldsymbol{\varepsilon}(\u)))\tr(\boldsymbol{\varepsilon}(\u))I
+2\tilde{\mu}(\dev(\boldsymbol{\varepsilon}(\u)))\boldsymbol{\varepsilon}(\u).$$
Here above, $\tilde{\lambda}$ and $\tilde{\mu}$ are the nonlinear Lam\'e functions,
$\boldsymbol{\varepsilon}(\u):=\frac{1}{2}(\nabla\u+(\nabla\u)^T)$
is the small deformation strain tensor, the symbol
$\tr$ represents the trace operator and $\dev(\boldsymbol{\tau})=||(\boldsymbol{\tau}
-\frac{1}{2}tr(\boldsymbol{\tau})I)||$ is the Frobenius norm of the deviatoric part of the tensor $\boldsymbol{\tau}$.

We take the Lam\'e functions as follows:
$$\tilde{\mu}(\rho):=\frac{3}{4} \left( 1+ (1+\rho^2)^{-1/2}   \right) \cdot 10^4{\rm MPa}
\quad
\text{and}\quad \tilde{\lambda}(\rho):=\frac{3}{4} \left(1- 2 \tilde{\mu}(\rho)\right)\cdot 10^4{\rm MPa}
\quad\forall\rho\in{\mathbb R}^{+},$$
%
This function $\tilde\mu$ corresponds to the
Carreau law for viscoplastic materials. It is easy to verify that the hypotheses at the beginning of Section~\ref{theor-results} are fulfilled by our choice of $\tilde\lambda$ and $\tilde\mu$.
We have taken the load $\f$ such that
the solution $\u$ of Problem \eqref{elast-prob-strong} is given by:
\begin{equation*}
u_{1}(x,y)=u_{2}(x,y)=\sin(\pi x)\sin(\pi y).
\end{equation*}

In Table~\ref{TAB:L1} we report
the convergence history of the virtual method
\eqref{elast-virt-prob} applied to our test problem with
different families of meshes.
The table includes the number of mesh vertices,
the convergence rates $R$, and the discrete errors
$E_{0,\infty}^h$ and $E_{1,2}^h$.


\begin{table}[ht]
\begin{center}
{\scriptsize\begin{tabular}{|c|c|cc|cc|}
\hline
Mesh  &$N_h$  & $E_{0,\infty}^h$ & ${R_{0,\infty}}$ & $E_{1,2}^h$ & ${R_{1,2}}$\\
\hline
             & 64  &3.4192e-2 &-- &4.5675e-1 &--    \\
              & 192  &8.2511e-3 &2.59 &2.4445e-1 &1.14  \\
$\Th^{1}$      & 640  &2.4353e-3 &2.03 &1.2803e-1 &1.07 \\
              & 2304  &6.7066e-4 &2.01 &6.5274e-2 & 1.05 \\
              & 8704  &1.7495e-4 &2.02 &3.2919e-2 & 1.03 \\
              & 33792   &4.4619e-5 &2.01 &1.6527e-2 &1.02  \\
\hline
             & 64  &5.6458e-2  &-- &5.0007e-1 &--    \\
              & 192  &1.9675e-2 &1.92 &2.7166e-1 &1.11  \\
$\Th^{2}$      & 1280 &6.4750e-3 &1.85 &1.4054e-1 &1.09 \\
              & 2304 &2.01403-3 &1.82 &7.1120e-2 &1.06 \\
              & 8704 &5.4860e-4 &1.96 &3.5590e-2 &1.04 \\
              & 33792  &1.4070e-4 &2.01 &1.7817e-2 & 1.02 \\
\hline
               &25  &6.1947e-2 &--   &7.1975e-1 &--   \\
              & 81  &9.3599e-3 &3.21 &3.5627e-1 &1.19  \\
$\Th^{3}$      &578  &1.7576e-3 &2.62 &1.7809e-1 &1.09   \\
              &1089 &4.2329e-4 &2.14 &8.9038e-2 &1.04   \\
             &4225  &1.0516e-4 &2.05 &4.4518e-2 &1.02   \\
             &16641 &2.6254e-5 &2.02 &2.2259e-2 &1.01   \\
\hline
               &25  &1.5401e-1 &-- &1.0516e-0 &--   \\
              & 81  &3.3021e-2 &2.62 &5.3972e-1 & 1.14 \\
$\Th^{4}$      &578  &7.1005e-3 &2.42 &2.7525e-1 & 1.06  \\
              &1089  &1.6650e-3 &2.19 &1.3832e-1 & 1.04  \\
             &4225  &4.1133e-4 &2.06 &6.9382e-2 & 1.02  \\
             &16641  &9.0462e-5 &2.21 &3.2452e-2 & 1.05  \\
             \hline
\hline
\end{tabular}}
\caption{{Approximation of $\u$: convergence
analysis of the virtual method \eqref{elast-virt-prob}}.}
\label{TAB:L1}
\end{center}
\end{table}

We observe from Table~\ref{TAB:L1} that a clear first
order convergence rate in the discrete $H^1$ like norm
and show a quadratic rate in the discrete $L^\infty$ norm.

\subsection{A benchmark elasticity model problem with analytical solution}\label{sec:test2}

In this test case, we select the constitutive load as

$$\sigmab = \sigmab(x,\nabla \u(x))=
\hat{\mu}(\boldsymbol{\varepsilon}(\u))\boldsymbol{\varepsilon}(\u),$$
where $\hat{\mu}$ is defined by the following nonlinear function:
$$\hat{\mu}(\boldsymbol{\varepsilon}(\u)):=3 (1+\Vert\boldsymbol{\varepsilon}(\u)\Vert^2) \cdot 10^4{\rm MPa},$$
with
$$\Vert\boldsymbol{\varepsilon}(\u)\Vert^2=\sum_{i,j=1}^{2}\vert\varepsilon_{ij}\vert^2.$$

We have taken the load $\f$ such that
the solution $\u$ of Problem \eqref{elast-prob-strong} is given by:
\begin{equation*}
u_{1}(x,y)=u_{2}(x,y)=10\sin(\pi x)\sin(\pi y).
\end{equation*}

We remark that this choice does not actually correspond to any elastic material. Instead, it has been chosen as a ``benchmark model'' which does not satisfy the assumption at the begining of Section~\ref{theor-results}: condition~\eqref{sigmacont} does not hold, in particular.

Table~\ref{TAB:L2} shows the convergence history of the virtual method
\eqref{elast-virt-prob} applied to our test problem with
different families of meshes.
The table includes the number mesh vertices,
the convergence rates $R$, and the discrete errors
$E_{0,\infty}^h$ and $E_{1,2}^h$.


\begin{table}[ht]
\begin{center}
{\scriptsize\begin{tabular}{|c|c|cc|cc|}
\hline
Mesh  &$N_h$  & $E_{0,\infty}^h$ & ${R_{0,\infty}}$ & $E_{1,2}^h$ & ${R_{1,2}}$\\
\hline
             & 64  &4.1122e-2  &-- &4.6371e-0 &--    \\
              & 192  &1.7816e-2 &1.52 &2.6318e-0 & 1.03 \\
$\Th^{1}$      & 1280 &5.0006e-3 &2.11 &1.3317e-0 &1.13 \\
              & 2304 &1.2449e-3 &2.17 &6.6288e-1 &1.08 \\
              & 8704 &2.9750e-4 &2.15 &3.3092e-1 &1.04 \\
              &33792  &8.2512e-5 &1.90 &1.6553e-1 &1.02  \\
\hline
            & 64   &8.1685e-2 &-- &5.1698e-0 &--    \\
             &192   &2.3823e-2 &2.24 &2.9790e-0 &1.00  \\
$\Th^{2}$      &1280   &1.4234e-2 &0.86 &1.5553e-0 &1.08 \\
              & 2304   &5.9189e-3 &1.37 &7.6103e-1 &1.12 \\
             & 8704  &1.7906e-3 &1.80 &3.6614e-1 &1.10 \\
              &33792  &4.7067e-4 &1.97 & 1.7981e-1 &1.05  \\
\hline
              & 25  &1.8457e-1 &-- &9.6706e-0 &--    \\
             & 81  &5.2374e-2 &2.14 &4.0009e-0 &1.50  \\
$\Th^{3}$      & 578  &1.5787e-2 &1.89 &1.8538e-0 &1.21 \\
              & 1089  &4.5978e-3 &1.86 &9.0144e-1 &1.09 \\
              & 4225  &1.2340e-3 &1.94 &4.4672e-1 &1.04 \\
              &16641  &3.1086e-4 &2.01 &2.2279e-1 &1.02 \\
\hline
               &25  &1.4957e-1 &-- &11.0527e-0 &--   \\
              & 81  &3.6140e-2 &2.41 &5.4418e-0 &1.20  \\
$\Th^{4}$      &578  &1.1670e-2 &1.78 &2.6376e-0 & 1.13  \\
              &1089 &3.6360e-3 &1.76 &1.3130e-0 & 1.05  \\
             & 4225  &1.1048e-3 &1.76 &6.5565e-1 & 1.02  \\
             &16641 &3.1365e-4 &1.83 &3.2786e-1 &  1.01 \\
             \hline
\hline
\end{tabular}}
\caption{{Approximation of $\u$: convergence
analysis of the virtual method \eqref{elast-virt-prob}}.}
\label{TAB:L2}
\end{center}
\end{table}

Once more, a quadratic order of convergence in the discrete $L^\infty$ norm
and a linear order convergence rate in the discrete $H^1$ like norm
can be clearly appreciated from Table~\ref{TAB:L2}.

We now consider the same $\Omega$ and the same constitutive law, but we choose a couple of different loads. The purpose is now to
show the importance of updating the choice of the stability constant appearing in the elastic form \eqref{a-local}, for instance by employing the recipe detailed in~\eqref{alfa} (see Remark~\ref{rem:alfa}). Therefore, we consider two different external forces, compatible with the following two analytical solutions:

$$
\begin{aligned}
& \textrm{Case 1:} \ \u = \Big( x(1-x)y(1-y) , x(1-x)y(1-y) \Big)^T , \\
& \textrm{Case 2:} \  \u = 80*\Big( x(1-x)y(1-y) , x(1-x)y(1-y) \Big)^T .
\end{aligned}
$$
We notice that in Case 1 the solution gives rise to deformations of moderate magnitude, while in Case 2 much larger deformations occur.
We consider a single family of three regular Voronoi meshes, generated using the algorithm in \cite{polymesher}.
Moreover, we choose the following relative error measure, involving both the displacement components at all the vertices $\mathsf{v}$ of the mesh:
$$
E_{\infty} = \frac{\max_{\mathsf{v} \in \Th, \: i=1,2} |u_i(\mathsf{v}) - (u_{h})_i(\mathsf{v})|}{\max_{\mathsf{v} \in \Th, \: i=1,2}|u_i(\mathsf{v})|} .
$$

In Table \ref{tab:new1} we report the relative errors computed for Case 1, using both the updated scalings introduced in~\eqref{alfa} and a fixed scaling. We notice that convergence is attained for both the strategies of the scaling choice.

In Table \ref{tab:new2} we report the relative errors computed for Case 2, using both the updated scalings introduced in~\eqref{alfa} and a fixed scaling. We notice that for this case, convergence is attained when using the updating strategy, while choosing a fixed scaling provides unsatisfactory results. In particular, on the finest mesh the error is still around $20\%$. Moreover, the solution is highly oscillating due to the presence of unstable numerical modes (figure not shown).

\begin{table}[ht]
\begin{center}
\begin{tabular}{|c|c|c|c|}
\hline
Mesh  & $N_h$ &Updated $\alpha_E$&Fixed $\alpha_E$ \\
\hline
Mesh 1   & 199      & $1.715e{-2}$  & $1.174e{-2}$   \\
Mesh 2   & 800      & $3.580e{-3}$  & $3.392e{-3}$  \\
Mesh 3   & 3179    & $1.287e{-3}$   & $8.946e{-4}$ \\
\hline
\end{tabular}
\caption{Case 1: relative errors for the updated and fixed choice of the scaling.}
\label{tab:new1}
\end{center}
\end{table}

\begin{table}[ht]
\begin{center}
\begin{tabular}{|c|c|c|c|}
\hline
Mesh  & $N_h$  &Updated $\alpha_E$&Fixed $\alpha_E$ \\
\hline
Mesh 1   & 199       & $2.384e{-2}$ & $2.685e{0}$  \\
Mesh 2   & 800       & $9.299e{-3}$ &   $9.555e{-1}$  \\
Mesh 3   & 3179     & $3.132e{-3}$  & $2.090e{-1}$ \\
\hline
\end{tabular}
\caption{Case 2: relative errors for the updated and fixed choice of the scaling.}
\label{tab:new2}
\end{center}
\end{table}

\subsection{Von Mises plasticity}
\label{sec:plast}

In the present section we show a numerical example for an inelastic material, von Mises plasticity with linear hardenings.
We consider the classical problem of a strip with circular hole in plain strain regime under enforced displacements of $\delta$ amplitude at two ends. Due to the symmetry of the problem, we can consider one quarter of the strip, as depicted in figure \ref{fig:strip} (left).
\begin{figure}[ht]
\begin{center}
\includegraphics[width=6cm, height=8 cm]{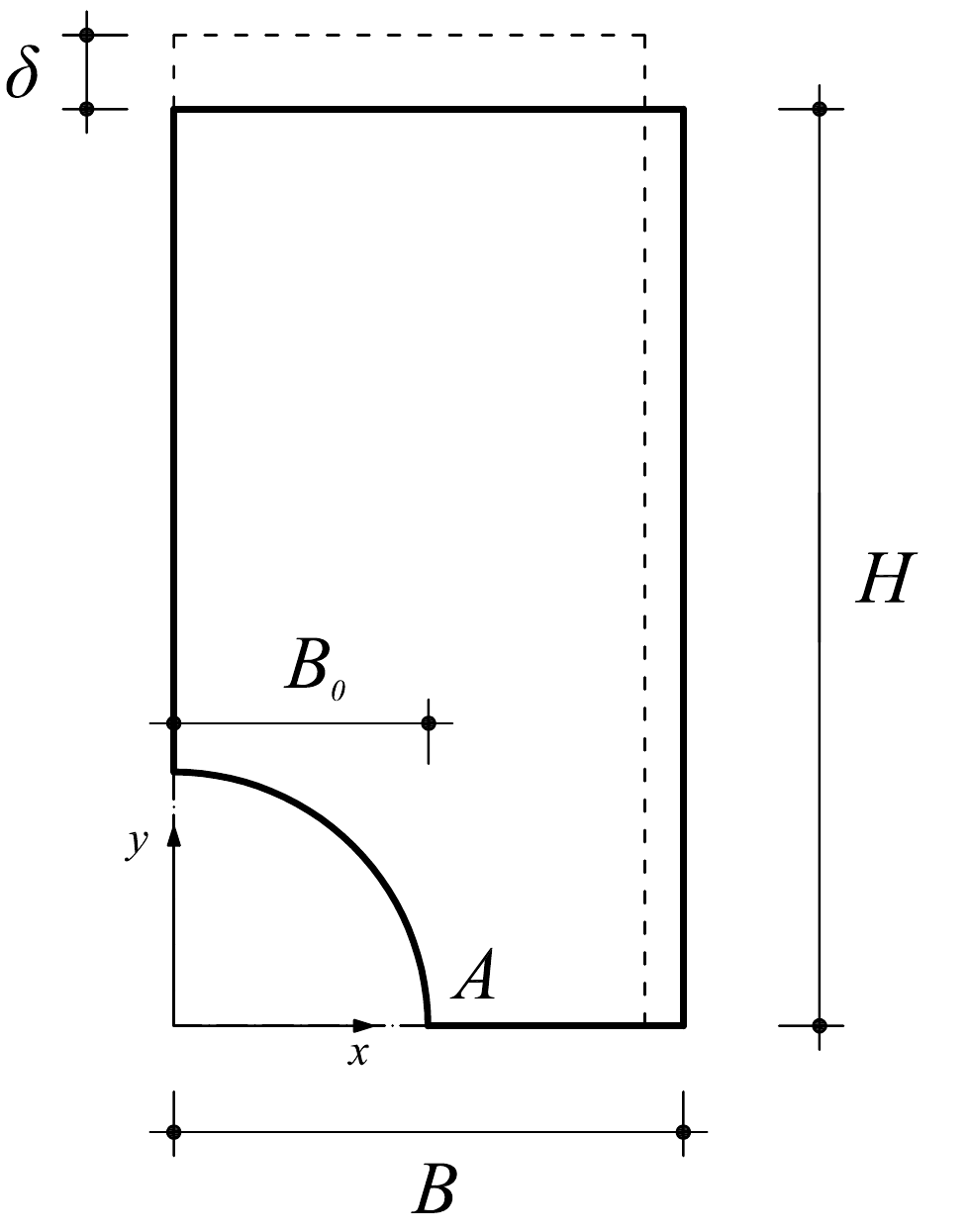}
\includegraphics[width=5.5cm, height=8.7cm]{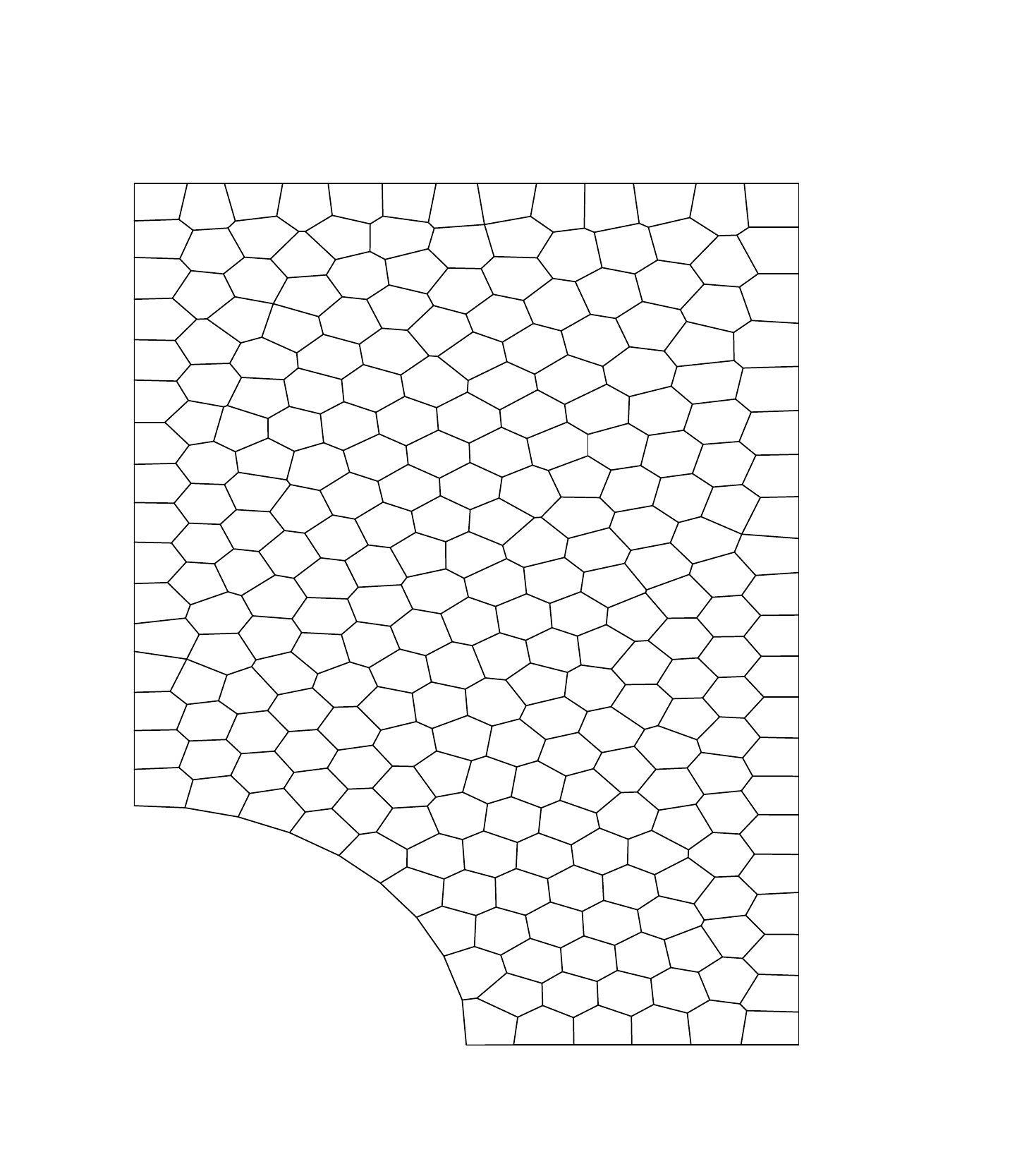}
\caption{Left: depiction of the geometry for the perforated strip problem. Right: sample Voronoi mesh V2.}
\label{fig:strip}
\end{center}
\end{figure}
The geometric data are
$$
B=100 \: \textrm{mm}, \ H= 180 \: \textrm{mm}, \  B_0 = 50 \: \textrm{mm} , \ \delta=10 \: \textrm{mm} .
$$
We consider a $J_2$ plasticity model with linear kinematic and isotropic hardenings (see for instance \cite{SimoHughes}) with material parameters
$$
E=70 \:\textrm{MPa}, \
\nu = 0.2 \:\textrm{MPa}, \
\sigma_{y,0}= 0.8 \:\textrm{MPa}, \
H_{\rm iso} = 10 \:\textrm{MPa}, \
H_{\rm kin} = 10 \:\textrm{MPa} .
$$
For comparison purposes, we take as ``exact solution'' one obtained with linear finite elements on a fine triangular mesh with $45312$ elements. Note that, since the considered model includes hardenings, there is no risk of volumetric locking and thus triangular elements are a good choice.
We solve the problem on a sequence of four Voronoi meshes (mesh V1 to mesh V4) generated with the code PolyMesher \cite{polymesher}. We depict a sample mesh V2 in figure \ref{fig:strip} (right) while the number of vertices in each grid can be found in Table \ref{tab:strip}.
In all cases we use the incremental loading procedure described in Section \ref{sec:2.3} with 100 time-steps. At each time step the constitutive law is solved using a classical radial return map algorithm (see for instance \cite{SimoHughes}, Chapter 3).
For each mesh we show the following values in Table \ref{tab:strip}:
\begin{itemize}
\item The vertical displacement at the point A of coordinates $(0\textrm{mm},50\textrm{mm})$, where the axes origin is at the center of the hole;
\item the horizontal displacement at the point B of coordinates $(50\textrm{mm},0\textrm{mm})$;
\item the maximum stress $\sigma_{\rm max}$;
\item the total stress $\sigma_T$, i.e. the integral over $\Omega$ of the stress amplitude $|| \sigma || = \big( \sum_{i,j=1,2} |\sigma_{ij}|^2\big)^{1/2}$.
\end{itemize}

\begin{table}
\begin{center}
\begin{tabular}{|c|c|c|c|c|c|}
\hline
Mesh  & $N_h$  & Displ. A & Displ. B & $\sigma_{\rm max}$  & $\sigma_T$ \\
\hline
V1             & 129        & 0.7839 & -0.3181 & 3.3842 & 244.2324 \\
V2             & 511      & 0.8173 & -0.3928 & 4.1354 & 240.1062\\
V3             & 2032    & 0.8253 & -0.4212 & 4.4266 & 238.7653 \\
V4             & 8131    & 0.8277 & -0.4300 & 4.7755 & 238.3688 \\
Reference  & 22921  & 0.8284 & -0.4334  & 4.9891 & 238.2631 \\
\hline
\end{tabular}
\caption{Number of mesh vertices, displacements at points A and B, maximum stress and total stress for the four Voronoi meshes and for a reference value obtained with a fine triangular mesh.}
\label{tab:strip}
\end{center}
\end{table}
Note that, on purpose, in Table \ref{tab:strip} we consider quantities for which is easy to obtain convergence (displacement at point A ad total stress) and other ones for which is harder (displacement at point B and maximum stress). In all cases we can appreciate the convergence of the method towards the reference values; finer Voronoi meshes would be needed for a better approximation of the maximum stress.

In figure \ref{fig:gamma} we depict the value of the plastic consistency parameter $\gamma$ for the V4 and for the fine reference mesh. The parameter $\gamma$ indicates if and how much plastification has occurred locally for the material; we refer again to \cite{SimoHughes} for a detailed description of the model. Again, the results for the proposed method are in good accordance with the reference one.

\begin{figure}[t]
\begin{center}
\includegraphics[width=7cm, height=7cm]{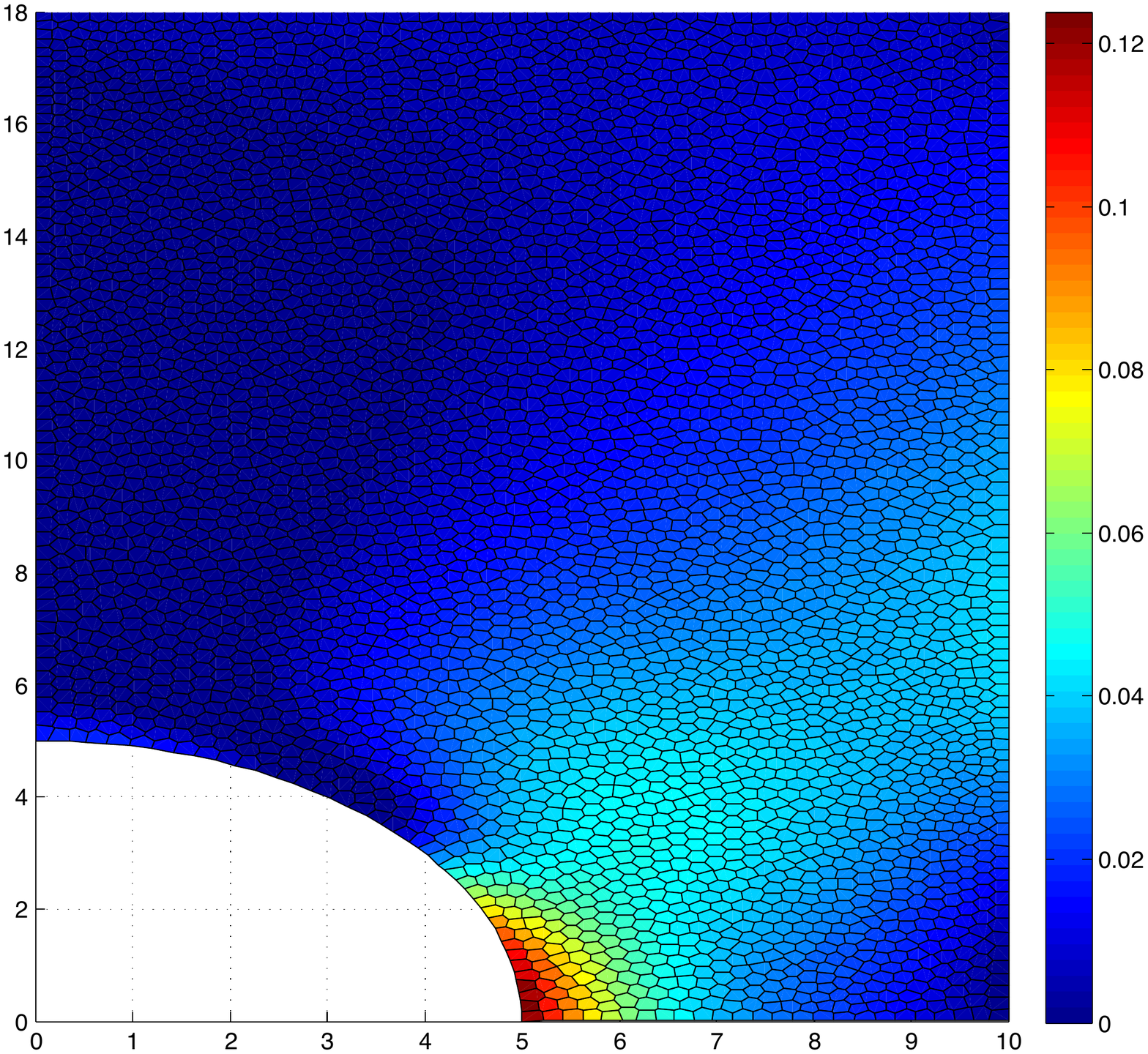}
\includegraphics[width=7cm, height=7cm]{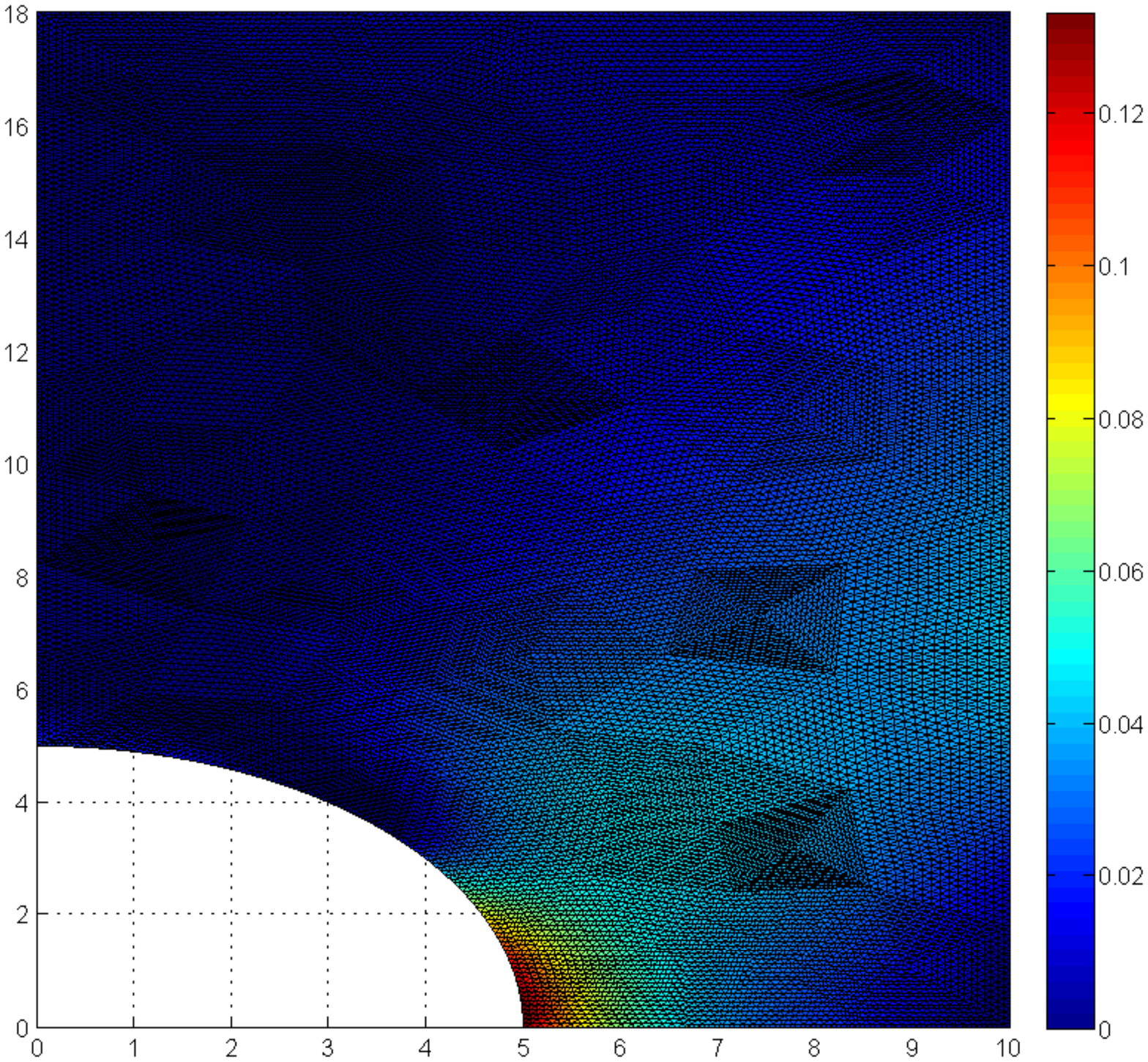}
\caption{Depiction of the plastic flow $\gamma$, mesh V4 on the
left and reference triangular mesh on the right.}
\label{fig:gamma}
\end{center}
\end{figure}

%
%
%
%
%

\subsection{Finite strain elasticity}\label{s:largedef}

The method detailed in Sections~\ref{sec:2.1}-\ref{sec:2.2} can also be applied to elastic problems in a large strain regime. However, we remark that the complexity of the finite elasticity problem requires a much deeper design and analysis than the one here presented. Therefore, the following discussion should be intended only as a very preliminary study towards the VEM discretization of large deformation elastic problems.

We here focus on neo-Hookean hyperelastic materials, but different constitutive laws could be considered. Following a material description (see~\cite{BonetWood, Ciarlet, MarsdenHughes}, for instance),  the variational formulation of the elastic large deformation problem reads as in~\eqref{elast-prob}:

\begin{equation}\label{elast-prob-largdef}
\left\{
\begin{aligned}
& \textrm{Find } \u \in \V \textrm{ such that } \\
& \int_{\Omega} \Pb(x,\nabla \u(x)) : \nabla \v(x) \dx = \int_{\Omega} \f(x)\cdot\v(x) \dx
\quad \forall \v \in \W ,
\end{aligned}
\right.
\end{equation}
where the first Piola-Kirchhoff stress tensor $\Pb(x,\nabla \u(x))$ is not necessarily symmetric. As for Problem~\eqref{elast-prob}, in~\eqref{elast-prob-largdef} the symbol $\V$ denotes the space of admissible displacements and $\W$ the space of its variations.
A homogeneous neo-Hookean material is described by the constitutive law:

\begin{equation}\label{neo-hookean}
\begin{aligned}
\Pb(x,\nabla \u(x))= &\mu [ ( \Id + \nabla\u) + ( \Id + \nabla\u)^{-T} ] \\
& + \lambda \Theta(\det ( \Id + \nabla\u)  )\pi( \det ( \Id + \nabla\u) ) ( \Id + \nabla\u)^{-T} .
\end{aligned}
\end{equation}
Above, $\lambda$ and $\mu$ are given constants, $\Theta: {\mathbb R}^+ \longrightarrow  {\mathbb R}$ is a suitable smooth function, and
$\pi$ is defined as
\begin{equation}\label{pifunction}
 \pi(s) = \Theta'(s) s\ .
 \end{equation}
Here, we choose $\Theta(s)= s - 1$, so that $\pi(s)=1$.

A possible virtual method for Problem~\eqref{elast-prob-largdef} can be designed exactly as in Sections~\ref{sec:2.1}-\ref{sec:2.2} , simply by systematically substituting $\Pb$ in place of $\sigmab$.

We test the method considering a square block of side length $1{\rm m}$, which initially occupies the region $\Omega=(0,1)^2$. We impose clamped boundary conditions on the side $\Gamma_c= \{ 0 \}\times[0,1]$, while the remaining part of the boundary is free. The material parameters are chosen as $\mu = 2.6316\cdot 10^4{\rm MPa}$ and $\lambda = 5.1086\cdot 10^4{\rm MPa}$. The load is given by $\f =(1,0)^T10.5\cdot 10^{10}{\rm N}/{\rm m}^3$.

Table~\ref{tab:large} displays the computed displacements of the material point $P=(1,1)^T$, when using triangular (T1,...,T4), quadrilateral (Q1,...,Q4), and hexagonal Voronoi (V1,...,V4) meshes. A reference solution at the same point, obtained with a very fine triangular mesh of $70344$ elements,  corresponding to $35459$ mesh vertices,  is also reported. Finally, Figure~\ref{fig:large} depicts the deformed body when using the triangular mesh T2, the square mesh Q2 and the hexagonal Voronoi mesh V2 of Table~\ref{tab:large}. We notice that for every considered scheme, convergence to the reference solution occurs, and the deformed shapes appear to be sensible.


\begin{table}[ht]
\begin{center}
\begin{tabular}{|c|c|c|c|}
\hline
Mesh  & $N_h$ & $x-$Displ. at $P$ & $y-$Displ. at $P$ \\
\hline
T1             &   55       & 0.9865 & -0.0438  \\
T2             &   183    & 1.0615 & -0.0398    \\
T3             &    727   & 1.0848 & -0.0358  \\
T4             &    2810  & 1.0967 & -0.0354   \\
\hline
\hline
Q1             &    49     & 0.9979 & -0.0736  \\
Q2             &      196  & 1.0730 & -0.04791 \\
Q3             &  784    & 1.0950 & -0.0391   \\
Q4             &   3025   & 1.1005 & -0.0364  \\
\hline
\hline
V1             &   52     & 0.9125 & -0.0673  \\
V2             &    199  & 1.0344 & -0.0520   \\
V3             &  800   & 1.0722 & -0.0408    \\
V4             &    3179 & 1.0918 & -0.0368  \\
\hline
\hline
Reference  &  35459  &  1.1018    &  -0.0353    \\
\hline
\end{tabular}
\caption{ Computed displacements using triangular (T1,...,T4),
square (Q1,...,Q4), and hexagonal Voronoi (V1,...,V4) meshes.}
\label{tab:large}
\end{center}
\end{table}

\begin{figure}[ht]
\begin{center}
\includegraphics[width=4.5cm, height=4.5cm]{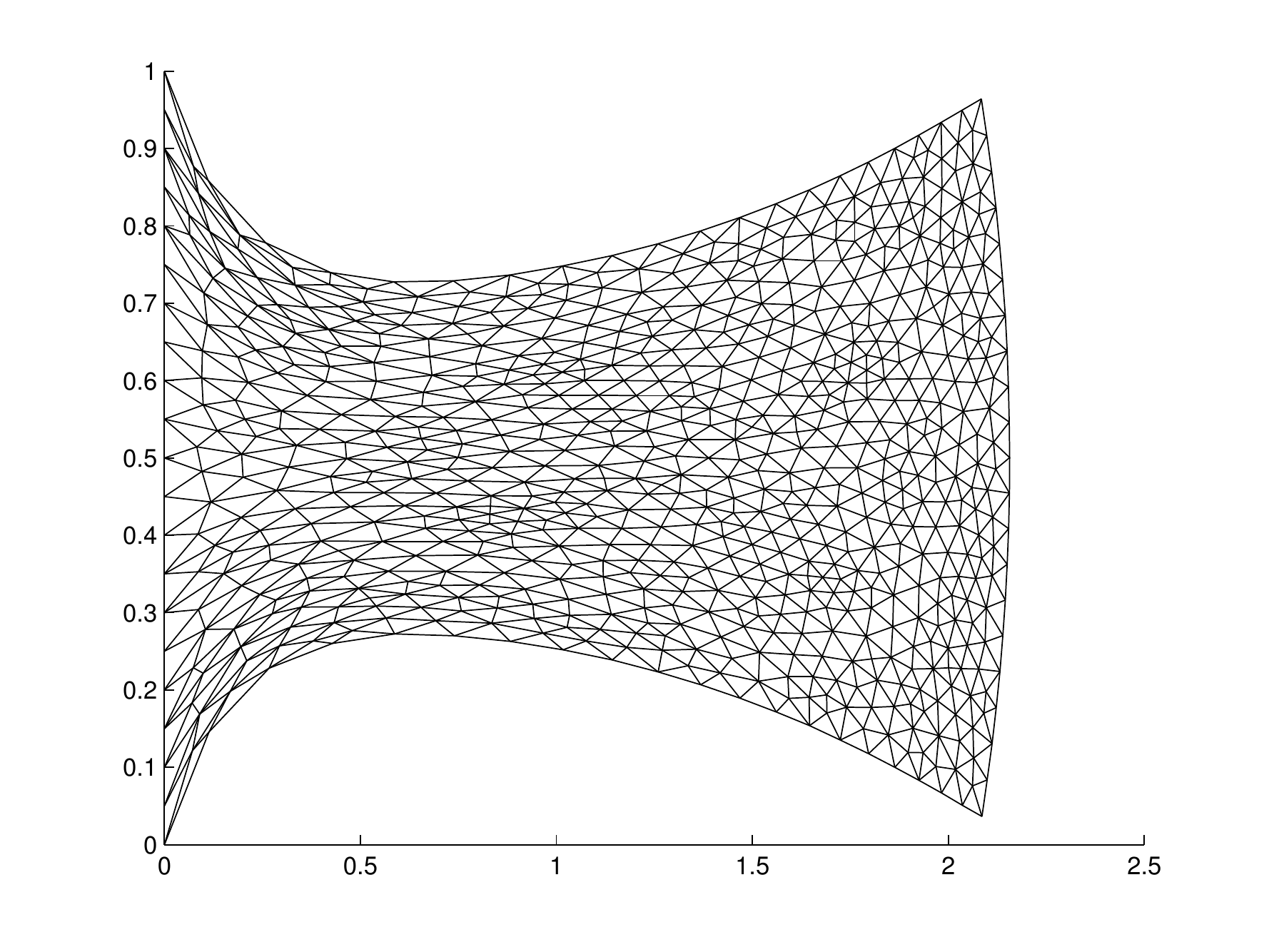}
\includegraphics[width=4.5cm, height=4.5cm]{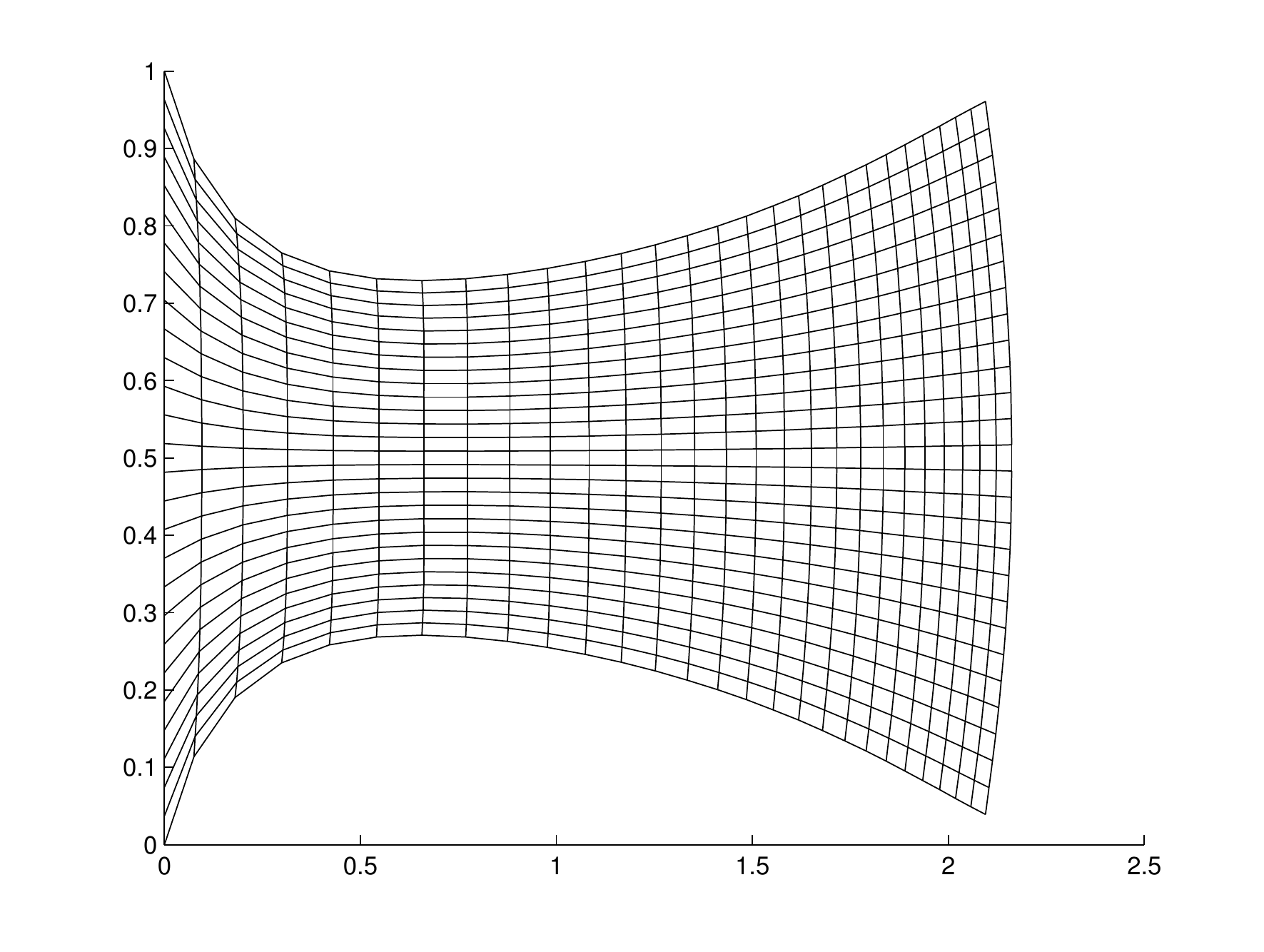}
\includegraphics[width=4.5cm, height=4.5cm]{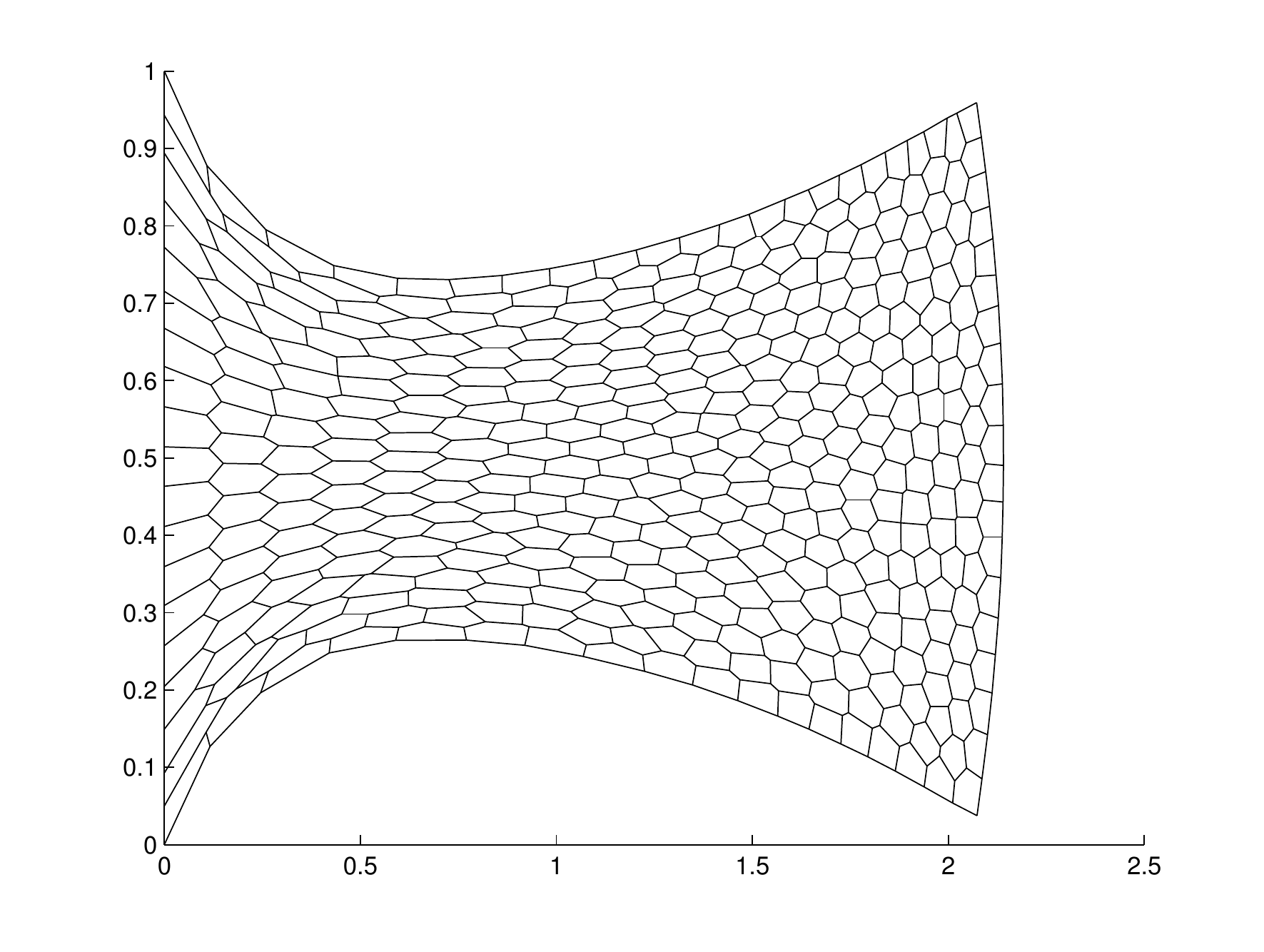}
\caption{Deformed body obtained with the triangular mesh T2 (left), the square mesh Q2 (center), and the hexagonal Voronoi mesh V2 (right).}
\label{fig:large}
\end{center}
\end{figure}


\section{Conclusions}\label{sec:concls}
We have presented a Virtual Element Method to deal with fairly
general non-linear elastic and inelastic problems. Our scheme is
based on a low-order approximation of the displacement field,
together with a suitable treatment of the numerical displacement
gradient. The proposed method allows for general polygonal/polyhedral meshes, is efficient in terms of number of
applications of the constitutive law, and can make use of any
standard black-box constitutive law algorithm. We have presented
several numerical tests assessing the computational performance of
the proposed methodology. However, we remark that this study is
intended as a first step towards the design of efficient Virtual
Element Methods for non-linear Computational Mechanics problems.
Many possible extensions and improvements could be of interest.
For instance, large deformation problems require a much deeper
investigations, and other inelastic cases such as perfect
plasticity or damage could be considered.

\medskip

\noindent{\bf Acknowledgements.}
D. Mora was partially supported by CONICYT-Chile through FONDECYT
project No. 1140791 and by project Anillo ACT 1118 (ANANUM).

\bibliographystyle{plain}
\bibliography{VEM-biblio.bbl}

\end{document}